\documentclass{amsart}
%
\usepackage{amsmath}
\usepackage{amssymb}
\usepackage{amsthm}
\usepackage{dsfont}
\usepackage{cite}
\usepackage{lineno}
\usepackage{subfigure}
\usepackage{graphicx}
\usepackage{multirow}
\usepackage{color}
\usepackage{xspace}
\usepackage{pdfsync}
\usepackage{hyperref} 
\usepackage{algorithmicx}
\usepackage{algpseudocode}
\usepackage{algorithm}
\usepackage{mathtools}
\usepackage{enumitem}
\graphicspath{{MeshfreeFigures/}}


\DeclareMathOperator*{\argmax}{argmax}

\DeclarePairedDelimiter\floor{\lfloor}{\rfloor}
\newcommand{\bq}{\begin{equation}}
\newcommand{\eq}{\end{equation}}
\newcommand{\R}{\mathbb{R}}

\newcommand{\abs}[1]{\left\vert#1\right\vert}

\newcommand{\G}{\mathcal{G}}
\newcommand{\bO}{\mathcal{O}}

\newcommand{\Dt}{\mathcal{D}}

\newcommand{\Tf}{\mathcal{T}}
\newcommand{\torus}{\mathbb{T}}

\newcommand{\Lf}{\mathcal{L}}

\newcommand{\Nf}{\mathcal{N}}
\newcommand{\Nb}{\mathbb{N}}

\newcommand{\MA}{Monge-Amp\`ere\xspace}

\newcommand{\phiplus}{\phi^h}

\algnewcommand{\LineComment}[1]{\State \(\triangleright\) #1}

\newtheorem{theorem}{Theorem}
\theoremstyle{lemma}
\newtheorem{lemma}[theorem]{Lemma}
\newtheorem{corollary}[theorem]{Corollary}
\newtheorem{definition}[theorem]{Definition}

\newtheorem{remark}[theorem]{Remark}
\newtheorem{hypothesis}[theorem]{Hypothesis}
\theoremstyle{remark}

\makeatletter
\newcommand\appendix@section[1]{%
\refstepcounter{section}%
\orig@section*{Appendix \@Alph\c@section: #1}%
}
\let\orig@section\section
\g@addto@macro\appendix{\let\section\appendix@section}
\makeatother



\begin{document}

\title[finite difference schemes on compact manifolds]{On the reduction in accuracy of finite difference schemes on manifolds without boundary}

\author{Brittany Froese Hamfeldt}
\address{Department of Mathematical Sciences, New Jersey Institute of Technology, University Heights, Newark, NJ 07102}
\email{bdfroese@njit.edu}
\author{Axel G. R. Turnquist}
\address{Department of Mathematical Sciences, University of Texas at Austin, 2515 Speedway, Austin, TX 78712}
\email{agt6@njit.edu}

\thanks{The authors were partially supported by NSF DMS-1751996. }
\begin{abstract}
We investigate error bounds for numerical solutions of divergence structure linear elliptic PDEs on compact manifolds without boundary.  Our focus is on a class of monotone finite difference approximations, which provide a strong form of stability that guarantees the existence of a bounded solution.  In many settings including the Dirichlet problem, it is easy to show that the resulting solution error is proportional to the formal consistency error of the scheme.  We make the surprising observation that this need not be true for PDEs posed on compact manifolds without boundary.  {We propose a particular class of approximation schemes built around an underlying monotone scheme with consistency error $O(h^\alpha)$. By carefully constructing barrier functions, we prove that the solution error }is bounded by $O(h^{\alpha/(d+1)})$ in dimension $d$.  We also provide a specific example where this predicted convergence rate is observed numerically.  Using these error bounds, we further design a family of provably convergent approximations to the solution gradient.
\end{abstract}

\date{\today}    
\maketitle

In this article, we develop new convergence rates for numerical schemes for solving elliptic partial differential equations (PDEs) posed on compact manifolds without boundary.  A surprising result, which is also demonstrated empirically, is that the solution error need not be proportional to the consistency error of the scheme, even for the smoothest problems.

One fruitful approach to solving fully nonlinear elliptic PDEs such as the \MA equation has been to construct monotone discretizations. This avenue of research was inspired by~\cite{BSNum}, where the authors proved that a consistent, monotone, and stable numerical discretization is guaranteed to converge uniformly to the weak (viscosity) solution of the PDE, provided the underlying PDE satisfies a strong comparison principle. Notably, this result does not apply to many PDEs posed on manifolds without boundary, since those equations do not have a strong comparison principle. There is a large body of work on constructing monotone schemes in Euclidean space, see~\cite{benamou2014monotone, BenamouDuval, BFO_OTNum, mirebeau,chenwanlin, FroeseMeshfreeEigs, FO_MATheory, HS_Quadtree, HamfeldtBVP2,  HL_LagrangianGraphs, HL_ThreeDimensions, junliu, Nochetto_MAConverge, ObermanSINUM, ObermanEigenvalues}. The authors~\cite{xiaobingfenglewis2,xiaobingfenglewis1} introduced the notion of generalized monotonicity as an alternative approach to producing convergent methods for some nonlinear elliptic PDEs~\cite{xiaobingfenglewis3, xiaobingfenglewis4}.  

The authors of the present article have recently introduced a monotone method and convergence proof for a class of \MA type equations posed on the sphere~\cite{HT_OTonSphere,HT_OTonSphere2}.  The approximation techniques developed there extend naturally to many other elliptic PDEs posed on the sphere.  However, the theory guarantees only convergence of the methods, without providing any information about error bounds.


In this manuscript, we begin the process of developing convergent rates for numerical schemes on a compact manifold $M$ by considering linear elliptic divergence structure equations of the form
\begin{equation}\label{eq:PDE}
{-\text{div}_{M} \left( A(x) D_{M}u(x)\right)} + f(x) = 0  
\end{equation}
where $A(x)$ is symmetric positive definite.


{We denote
\bq\label{eq:PDEoperator}
\mathcal{L}[u](x) \equiv  -\text{div}_{M} \left( A(x) D_{M}u(x)\right)
\eq
and} notice immediately that the nullspace of this PDE operator consists of constants.  
{Thus solutions to the PDE~\eqref{eq:PDE} are, at best, unique only up to additive constants.}
We hereby fix any point $x_0 \in M$ and further impose the {additional} condition
\begin{equation}\label{eq:uniqueness}
u(x_0) = 0.
\end{equation}

There exist fairly general conditions upon which there exists a weak $H^1(M)$ solution to~\eqref{eq:PDE},~\eqref{eq:uniqueness} provided the given data satisfies the solvability condition
\begin{equation}\label{eq:solvability}
\int_{M} f(x)dx = 0.
\end{equation}
See~\cite[{Theorem~4.7}]{aubin}. This solvability condition arises naturally from the fact that $\mathcal{L}$ is self-adjoint and thus
\[ \int_M f(x)\,dx = - \int_M \mathcal{L}[u]\,dx = -\int_M u\mathcal{L}^*[1]\,dx = 0. \]

The linearized version of the \MA equation arising in Optimal Transport is an example of such a PDE {that has been well studied in Euclidean space}~\cite{linearization}. However, in our setting such linear elliptic PDEs will be posed on compact manifolds without boundary. Thus, they lack boundary conditions and the usual approaches of establishing convergence rates for numerical schemes do not work.

We have investigated the surprising fact that for manifolds without boundary it is possible to construct simple monotone discretizations of linear elliptic PDEs in one dimension for which the empirical convergence rate is asymptotically worse than the formal consistency error. Buttressing this, we derive explicit convergence rates on  {more general} manifolds without boundary.  In particular, we find that the error is bounded by $\mathcal{O}\left( h^{\alpha/(d+1)} \right)$ where $h^\alpha$ is the formal consistency error of the scheme, $d$ is the dimension of the manifold, and $h$ is the discretization parameter. This somewhat surprising result demonstrates even more clearly the need to design higher-order schemes for solving such elliptic PDE on manifolds without boundary. Future work will involve relating this convergence result for linear elliptic PDE in divergence form to nonlinear PDEs.

The availability of convergence rates also allows us to build new schemes for approximating solution gradients.  These are guaranteed to converge, whereas standard consistent finite difference approximations need not correctly approximate the gradient of a numerically obtained function.  We produce a family of gradient approximations, with error bounds that are limited by the $\mathcal{O}\left( h^{\alpha/(d+1)} \right)$ bounds on the $L^\infty$ solution accuracy.

In Section~\ref{sec:background}, we provide an overview of important background information relating to the numerical solution and analysis of elliptic PDEs on manifolds. In Section~\ref{sec:empirical}, we provide a simple one-dimensional example that illustrates numerically the reduction in accuracy that can occur in the absence of boundary conditions. In Section~\ref{sec:convergence}, we establish convergence rates for monotone schemes for linear uniformly elliptic PDE on manifolds without boundary. In Section~\ref{sec:mapping}, we show how these convergence rates can be used to devise convergent wider-stencil approximations of the solution gradient. {In Section~\ref{sec:results}, we provide computational results to validate the error bounds and techniques described in this article.}

\section{Background}\label{sec:background}
\subsection{{Linear} elliptic PDEs on manifolds}
The specific focus of the present article is linear divergence structure PDEs of the form
{
\bq
-\text{div}_M(A(x)\nabla_Mu(x)) + f(x) = 0,
\eq}
which are defined on a compact manifold $M$.
These equations are elliptic if $A$ is a symmetric positive definite matrix.  

Given sets $\Omega\subset\R^2$, $\Omega'\subset M$ and local coordinates $y:\Omega\to\Omega'$ we can locally recast this as the following linear divergence structure operator in Euclidean space:
\bq\label{eq:localcoords} -\frac{1}{\sqrt{\det G}}\nabla\cdot\left(\sqrt{\det G}A G^{-1} \nabla u\right) {+f(x) = 0}\eq
where $G$ is the metric tensor~\cite{cabre}.

{The results of this article are particularly motivated by the study of nonlinear generalizations of this PDE, such as Monge-Amp\`ere type equations.  The numerical solution of these nonlinear equations on manifolds is of growing interest in applications such as computer graphics~\cite{cui2019spherical},  optical design problems~\cite{anthonissen2021unified}, and data science~\cite{Peyre_ComputationalOT}.  However, little is known about error bounds or the approximation of solution gradients in this setting.  The present study of linear PDEs on manifolds will lay the groundwork for an ultimate generalization to the nonlinear setting.}


\subsection{{Monotone approximation schemes}}
%
To build approximation schemes for the PDE~\eqref{eq:PDE}, we begin with a point cloud $\G^h\subset M$ discretizing  the underlying manifold and let
\begin{equation}\label{eq:h}
h = \sup\limits_{x\in M}\min\limits_{y\in\G^h} d_{M}(x,y)
\end{equation}
denote the characteristic (geodesic) distance between discretization nodes.
In particular, this guarantees that any ball of radius $h$ on the manifold will contain at least one discretization point.

In this manuscript, we will consider finite difference discretizations of the PDE~\eqref{eq:PDE} of the form
\bq\label{eq:approx1} F^h \left( x,u(x),u(x)-u(\cdot) \right) = 0, \quad x\in \G^h. \eq

%
%

Critically, the approximation scheme~\eqref{eq:approx1} needs to be \emph{consistent} with the underlying PDE~\eqref{eq:PDE}.
\begin{definition}[Consistency error]\label{consistency}
We say that the approximation $F^h$ of the PDE operator $F$ has consistency error $\mathcal{O} \left( h^{\alpha} \right)$ if for every smooth $\phi\in C^{2,1}(M)$ there exists a constant $C$ such that
\[
 \|F^h(x,\phi(x),\phi(x)-\phi(\cdot)) - F \left(x, \nabla\phi(x), D^2 \phi(x) \right)\|_{L^\infty(\G^h)} \leq C h^{\alpha}
\]
for every sufficiently small $h>0$.
\end{definition}

\begin{remark}
{In this article, we assume conditions that ensure solutions lie in the H\"older space $C^{2,1}(M)$.}  It is also possible to design approximation schemes that depend on higher-order derivatives of the solution; indeed, this is assumption is typically needed for schemes with {superlinear} consistency error ($\alpha>{1}$).  The schemes analyzed in this article are required to satisfy an additional monotonicity assumption, which limits the consistency error to at most second-order ($\alpha \leq 2$).  See~\cite[Theorem~4]{ObermanSINUM}.
\end{remark}


Another concept that has proved important in the numerical analysis of elliptic equations is \emph{monotonicity}~\cite{BSNum}.  At its essence, monotone schemes reflect at the discrete level the elliptic structure of the underlying PDE.  This allows one to establish key properties of the discretization including a discrete comparison principle.  Even in the linear setting, monotonicity can play an important role in establishing well-posedness and stability of the approximation scheme~\eqref{eq:approx1}.

\begin{definition}[Monotonicity]\label{def:monotone}
The approximation scheme $F^h$~{\eqref{eq:approx1}}  is \emph{monotone} if it is a non-decreasing functions of its final two arguments.
\end{definition}

Closely related to monotonicity is the concept of a \emph{proper} scheme.
\begin{definition}[Proper]\label{def:proper}
The finite difference scheme $F^h$~{\eqref{eq:approx1}}  is \emph{proper} if {there exists a constant $C>0$ such that
\[ F^h(x,u,p)-F^h(x,v,p) \geq C(u-v) \]
whenever $u > v$.
}
\end{definition}

We note that any consistent, monotone scheme $F^h$ can be perturbed to a  proper scheme by defining
\[ G^h(x,u,p) = F^h(x,u,p) + \epsilon^hu \]
where $\epsilon^h\to0$ as $h\to0$. 

Monotone, proper schemes satisfy a strong form of the discrete comparison principle~\cite[Theorem~5]{ObermanSINUM}.  {Remarkably, this is the case even when the underlying PDE does not satisfy a comparison principle~\cite{HamfeldtBVP2}.}
\begin{theorem}[Discrete comparison principle]\label{thm:discreteComparison}
Let $F^h$ be a proper, monotone finite difference scheme and suppose that
\[ F^h(x,u(x),u(x)-u(\cdot)) \leq F^h(x,v(x),v(x)-v(\cdot)) \]
for every $x\in\G^h$.
Then $u \leq v$.
\end{theorem}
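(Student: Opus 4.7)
The plan is a standard contradiction argument exploiting the fact that $\G^h$ lies in the compact manifold $M$ (and is typically finite), so $w := u - v$ attains its maximum on $\G^h$.

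First I would assume for contradiction that the conclusion fails, i.e.\ that $M := \max_{x\in\G^h}(u(x)-v(x)) > 0$, and pick an interior maximizer $x^\star\in\G^h$. At such a point two things happen simultaneously: (i) $u(x^\star) > v(x^\star)$, and (ii) for every $y\in\G^h$,
\[
u(x^\star)-v(x^\star) \geq u(y) - v(y)
\qquad\Longleftrightarrow\qquad
u(x^\star)-u(y) \geq v(x^\star)-v(y).
\]
So the third (difference) argument evaluated at $u$ dominates, component by component, the one evaluated at $v$.

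Next I would apply the two structural hypotheses in sequence. Monotonicity (Definition~\ref{def:monotone}), which says $F^h$ is non-decreasing in its third argument, combined with (ii) yields
\[
F^h\bigl(x^\star,u(x^\star),u(x^\star)-u(\cdot)\bigr)
\;\geq\;
F^h\bigl(x^\star,u(x^\star),v(x^\star)-v(\cdot)\bigr).
\]
Properness, which says $F^h$ is \emph{strictly} increasing in its second argument, combined with (i) yields the strict inequality
\[
F^h\bigl(x^\star,u(x^\star),v(x^\star)-v(\cdot)\bigr)
\;>\;
F^h\bigl(x^\star,v(x^\star),v(x^\star)-v(\cdot)\bigr).
\]
Chaining these two gives
\[
F^h\bigl(x^\star,u(x^\star),u(x^\star)-u(\cdot)\bigr)
\;>\;
F^h\bigl(x^\star,v(x^\star),v(x^\star)-v(\cdot)\bigr),
\]
which directly contradicts the hypothesis at the node $x^\star$.

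There is really no hard step: the whole argument is mechanical once one has the right point to test at. The only subtlety worth flagging is the existence of the maximizer $x^\star$; this is immediate if $\G^h$ is finite, and otherwise follows from compactness of $M$ together with the convention that $u$ and $v$ are defined (and continuous) on all of $\G^h$. The role of the strict monotonicity built into properness is essential — without it, one would only obtain $\leq$ in the final chain and could not reach a contradiction, which is why the preceding remark about perturbing any monotone scheme to a proper one matters.
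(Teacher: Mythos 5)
Your argument is correct and is precisely the standard proof of this result; the paper itself gives no proof, instead citing it as Theorem~5 of the Oberman reference, where the argument is the same maximum-principle one you give (evaluate at a discrete maximizer of $u-v$, use monotonicity in the difference argument, then strict monotonicity from properness in the pointwise argument to contradict the hypothesized inequality). The only subtlety is the one you flag — existence of the maximizer — which is immediate here since the point cloud $\G^h$ on a compact manifold with $h>0$ is finite. One cosmetic note: you reuse the symbol $M$ for $\max_{\G^h}(u-v)$, which collides with the paper's use of $M$ for the manifold; choose a different letter.
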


Finally, we make a continuity assumption on the scheme in order to guarantee the existence of a discrete solution.
\begin{definition}[Continuity]\label{def:continuous}
The scheme $F^h$~{\eqref{eq:approx1}} is \emph{continuous} if it is continuous in its final two arguments.
\end{definition}
\begin{remark}
We recall that the domain of the first argument of $F^h$ is the discrete set $\G^h$.  Thus it is not meaningful to speak about continuity with respect to the first argument.
\end{remark}

Critically, continuous, monotone, and proper schemes always admit a unique solution~\cite[Theorem~8]{ObermanSINUM}.  Moreover, under mild additional assumptions, it is easy to show that the solution can be bounded uniformly independent of $h$.
\begin{lemma}[Solution bounds]\label{lem:properBounds}
Suppose the PDE~\eqref{eq:PDE} has a $C^{2,1}$ solution.  Let $F^h$ be continuous, monotone,  proper, and have consistency error $\mathcal{O}(h^\alpha)$.  Suppose also that there exists a constant $C>0$, independent of $h$, such that for every $\delta>0$,
\[ F^h(x,u+\delta,p) \geq F^h(x,u,p) + Ch^\alpha\delta. \]
Then for every sufficiently small $h>0$, the scheme~\eqref{eq:approx1} has a unique solution $u^h$ that is uniformly bounded independent of $h$.
\end{lemma}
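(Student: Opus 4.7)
The plan is to split the lemma into two parts. Existence and uniqueness of a discrete solution $u^h$ for sufficiently small $h$ follow immediately from the cited result \cite[Theorem~8]{ObermanSINUM}, since $F^h$ is assumed continuous, monotone, and proper. The substantive content is therefore the \emph{uniform} $L^\infty$ bound on $u^h$, and my approach is to construct explicit discrete super- and sub-solutions by perturbing the exact PDE solution by a constant and then to invoke Theorem~\ref{thm:discreteComparison}.

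Let $\bar u\in C^{2,1}(M)$ denote the unique solution of~\eqref{eq:PDE1}, which is bounded on the compact manifold $M$ by some constant $K=\|\bar u\|_{L^\infty(M)}$ independent of $h$. By consistency (Definition~\ref{consistency}) there exists a constant $C_1$, depending on $\bar u$ but not on $h$, such that
\begin{equation*}
\left|F^h\bigl(x,\bar u(x),\bar u(x)-\bar u(\cdot)\bigr) - F\bigl(x,\nabla\bar u(x),D^2\bar u(x)\bigr)\right| \leq C_1 h^\alpha
\end{equation*}
for every $x\in\G^h$ and every sufficiently small $h$. Since $F(x,\nabla\bar u,D^2\bar u)=0$, this yields the two-sided bound $|F^h(x,\bar u,\bar u-\bar u(\cdot))|\le C_1 h^\alpha$. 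Now fix $\delta^\star := C_1/C$, where $C$ is the constant from the strengthened properness hypothesis; note that $\delta^\star$ is independent of $h$. Define the shifted functions $v = \bar u + \delta^\star$ and $w = \bar u - \delta^\star$. Applying the properness assumption with the differences $v(x)-v(\cdot) = \bar u(x)-\bar u(\cdot)$ unchanged gives
\begin{equation*}
F^h\bigl(x,v(x),v(x)-v(\cdot)\bigr) \geq F^h\bigl(x,\bar u(x),\bar u(x)-\bar u(\cdot)\bigr) + C h^\alpha \delta^\star \geq -C_1 h^\alpha + C_1 h^\alpha = 0,
\end{equation*}
and symmetrically (by applying the strengthened properness to $\bar u = w+\delta^\star$)
\begin{equation*}
F^h\bigl(x,w(x),w(x)-w(\cdot)\bigr) \leq F^h\bigl(x,\bar u(x),\bar u(x)-\bar u(\cdot)\bigr) - C h^\alpha \delta^\star \leq 0.
\end{equation*}

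Since $u^h$ satisfies $F^h(x,u^h(x),u^h(x)-u^h(\cdot))=0$ on $\G^h$, the two inequalities above say that $v$ is a discrete supersolution and $w$ is a discrete subsolution. The discrete comparison principle (Theorem~\ref{thm:discreteComparison}), which applies because $F^h$ is proper and monotone, then yields
\begin{equation*}
w(x) \leq u^h(x) \leq v(x), \quad x\in\G^h,
\end{equation*}
hence $\|u^h\|_{L^\infty(\G^h)} \leq K + \delta^\star$, a bound independent of $h$.

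I do not expect a serious obstacle here: existence/uniqueness is quoted, and the barrier argument is essentially forced once one notices that the strengthened properness was designed precisely to absorb the $\mathcal{O}(h^\alpha)$ consistency error with a constant shift. The only subtle point to check carefully is the direction of the inequality when applying properness downward (producing the subsolution $w$), which just requires rewriting the hypothesis with $u$ replaced by $u-\delta$. The deeper reason a constant shift suffices is that properness and consistency error both scale as $h^\alpha$, so the required shift $\delta^\star$ is $h$-independent; any weakening of the properness scaling would force $\delta^\star$ to blow up with $h$ and destroy the uniform bound.
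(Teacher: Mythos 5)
Your proposal is correct and follows essentially the same path as the paper's own proof: existence from the cited result, then bounding the truncation error of the exact solution via consistency, shifting the exact solution up and down by an $h$-independent constant (you use $\delta^\star = C_1/C$, the paper uses $M > K/C$), invoking the strengthened properness to show these constant shifts produce discrete super- and sub-solutions, and closing with the discrete comparison principle. Your observation that $v(x)-v(\cdot)=\bar u(x)-\bar u(\cdot)$ so the constant shift only affects the second argument is the same step the paper uses implicitly.
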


{
\begin{remark}
We note that because the scheme~\eqref{eq:approx1} is proper, it will admit a unique solution even if the underlying PDE~\eqref{eq:PDE} does not have a unique solution.  The bound obtained in Lemma~\ref{lem:properBounds} depends on the particular solution of~\eqref{eq:PDE} used for reference, and need not be tight.
\end{remark}
}

\begin{proof}[{Proof of Lemma~\ref{lem:properBounds}}.]
Since $F^h$ is continuous, monotone, and proper, a solution $u^h$ exists by~\cite{ObermanSINUM}.
Let $u$ be {any $C^{2,1}$} solution of~\eqref{eq:PDE}.  By consistency, we know that there exists a constant $K$, which does not depend on $h$, such that \[-Kh^{\alpha} \leq F^h(x,u(x), u(x) - u(\cdot)) \leq Kh^{\alpha}.\]

Now let $M$ be any constant and compute
\begin{align*}
F^h(x,u(x)+M, u(x) - u(\cdot)) &\geq F^h(x,u(x), u(x) - u(\cdot)) + Ch^{\alpha} M\\
  &\geq (-K+CM)h^\alpha.
\end{align*}
Thus by choosing $M > K/C$, we find that
\[
F^h(x,u(x)+M, u(x) - u(\cdot)) > 0 = F^h(x,u^h(x), u^h(x) - u^h(\cdot)).
\]
Then by the Discrete Comparison Principle {(Theorem~\ref{thm:discreteComparison})},
\[u+M \geq u^h.\]

By an identical argument, we obtain
\[ u-M \leq u^h. \]

We conclude that
\[ \|u^h\|_{L^\infty} \leq \|u\|_\infty + M \]
and thus $u^h$ is uniformly bounded.
\end{proof}

%

\section{Empirical Convergence Rates in One Dimension}\label{sec:empirical}

This section will consider the very simple example of Laplace's equation on the one-dimensional torus $\torus^1$:
\bq\label{eq:torus1D}
\begin{cases}
-u''(x) = 0, & x \in \torus^1\\
u(0) = 0,
\end{cases}
\eq
which has the trivial solution $u(x) = 0$.

We use this toy problem to demonstrate several surprising properties of consistent and monotone approximations on compact manifolds, which motivate and validate the main results presented in the remainder of this article.  In particular, we observe that: 
\begin{enumerate}
\item[1.] Consistent, monotone, proper schemes need not converge to the true solution unless the solvability condition~\eqref{eq:solvability} is carefully taken into account.   \item[2.] Typical approaches for proving convergence rates for linear elliptic PDEs with Dirichlet boundary conditions fail on compact manifolds.  \item[3.] Actual error bounds achieved by convergent schemes can be asymptotically worse than the truncation error of the finite difference approximation.  \item[4.] A simple consistent scheme for the gradient need not produce a convergent approximation of the gradient when applied to a numerically obtained solution.
\end{enumerate}

%
%

\subsection{A non-convergent scheme}
We begin by describing a natural ``textbook'' approach to attempting to solve~\eqref{eq:torus1D} numerically, which does not lead to a convergent scheme. 

Consider the uniform discretization of the one-dimensional torus
\[ x_i = i h, \quad i = 0, \ldots, n-1 \]
where $h = 1/n$. Let $L^h$ be a consistent, monotone approximation of the Laplacian and let $f^h$ be a consistent approximation of the right-hand side (which is zero in this case).  We would like to solve the discrete system
\bq\label{eq:torus1D_discrete1}
L^h(x_i,u^h(x_i),u^h(x_i)-u^h(\cdot)) = f^h(x_i), \quad i = 0, \ldots, n-1.
\eq
However, this does not enforce the additional uniqueness constraint $u^h(0) = 0$.  Adding this as an additional equation leads to an over-determined system.  Instead, a natural approach is to replace the equation~\eqref{eq:torus1D_discrete1} at $x_0 = 0$ with this additional constraint.  This leads to the system
\bq\label{eq:torus1D_discrete2}
\begin{cases}
L^h(x_i,u^h(x_i),u^h(x_i)-u^h(\cdot)) = f^h(x_i), \quad i = 1, \ldots, n-1\\
u^h(x_0) = 0.
\end{cases}
\eq

As a specific implementation, we consider a wide-stencil approximation of the Laplacian, which mimics the type of scheme that is often necessary for monotonicity in higher dimension~\cite{Kocan,MotzkinWasow}.  We also make the scheme proper, which ensures that the system~\eqref{eq:torus1D_discrete2} has a unique solution~\cite{ObermanSINUM}.

Let $n=4^k$ be a perfect square (where $k\in\Nb$).  We will build schemes with stencil width $\sqrt{n} = 2^k$.  Define
\bq\label{eq:torus1D_laplacian}
L^h(x_i,u(x_i),u(x_i)-u(\cdot)) = -\frac{u(x_{i+\sqrt{n}}) + u(x_{i-\sqrt{n}}) - 2u(x_i)}{nh^2} + h(1+x_i)u(x_i)
\eq
and
\bq
f^h(x_i) = h.
\eq

The resulting approximation~\eqref{eq:torus1D_discrete1} is consistent with~\eqref{eq:torus1D}, monotone, and proper.  The use of wide stencils degrades the truncation error of the usual centered scheme from $\bO(h^2)$  to $\bO(h)$, which is of the same order as the consistency error introduced by the proper term and the approximation of the right-hand side.  Nevertheless, the discrete solution obtained by solving the system~\eqref{eq:torus1D_discrete2} does \emph{not} converge to the true solution of~\eqref{eq:torus1D}.

\begin{figure}%
\subfigure[]{\includegraphics[width=0.45\textwidth]{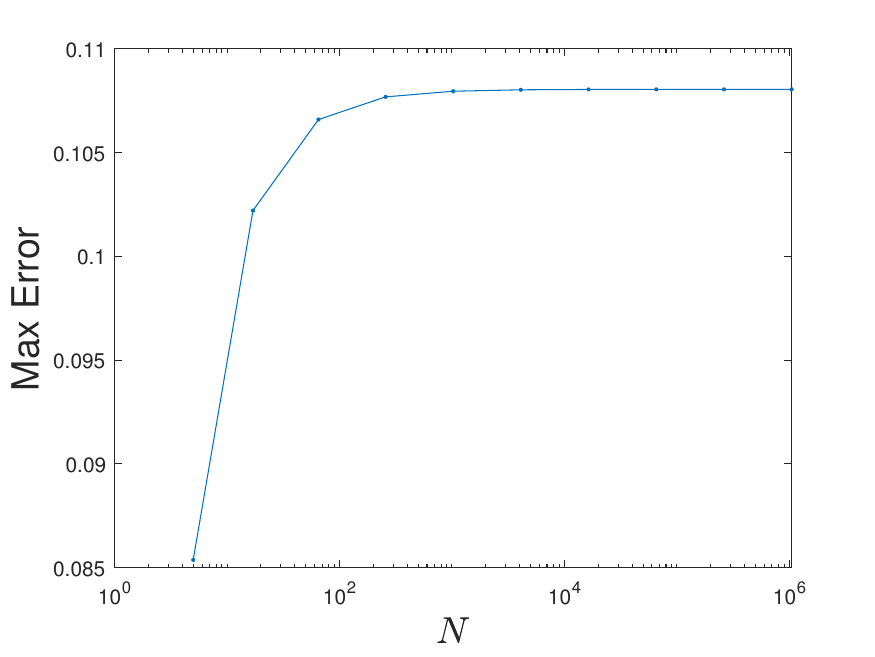}\label{fig:error1}}
\subfigure[]{\includegraphics[width=0.45\textwidth]{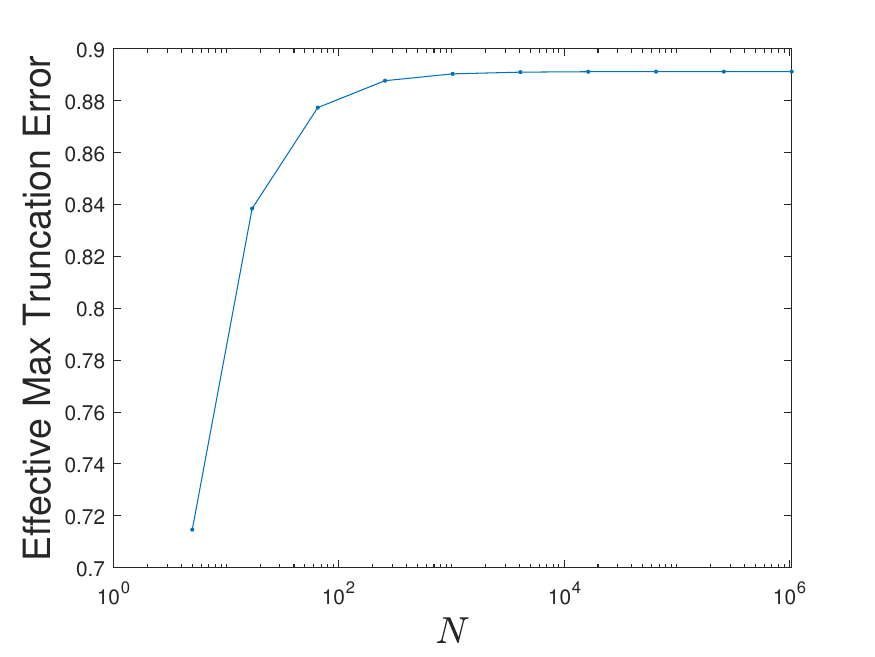}\label{fig:truncation}}
\caption{\subref{fig:error1}~Maximum error and \subref{fig:truncation}~effective maximum truncation error in the solution of~\eqref{eq:torus1D_discrete2}.}%
\label{fig:torus1D_noConvergence}%
\end{figure}

An issue that arises in this approach is that even though $L^h$ and $f^h$ are consistent with the original equation, they are not designed in a way that attempts to mimic the solvability condition~\eqref{eq:solvability} at the discrete level.  As a result, all the work of imposing this compatibility condition must be made up for at the single point $x_0=0$ where no approximation of the Laplacian is explicitly enforced in~\eqref{eq:torus1D_discrete2}.  This is evident in Figure~\ref{fig:truncation}, which plots the value of $L^h(x_0,u^h(x_0),u^h(x_0)-u^h(\cdot))$ (the ``effective'' truncation error of the scheme).  This does not converge to zero as the grid is refined.  In other words, the failure to incorporate the solvability condition at the discrete level has led to a scheme that is effectively inconsistent.

Enforcing a solvability condition at the discrete level is not straightforward: the discrete condition may not be known explicitly, and in many problems even the continuous solvability condition is not known explicitly~\cite{HL_LagrangianGraphs}.

{One solution to this challenge} is to automatically ``spread out'' the effects of the solvability condition by first solving a discrete system that is consistent with Laplace's equation at every grid point, then enforcing the uniqueness constraint in a second step.  The resulting procedure is
\bq\label{eq:torus1D_discrete3}
\begin{cases}
L^h(x_i,v^h(x_i),v^h(x_i)-v^h(\cdot)) = f^h(x_i), & i = 0, \ldots, n-1\\
u^h(x_i) = v^h(x_i) - v^h(x_0), & i = 0, \ldots, n-1.
\end{cases}
\eq

We notice that the resulting discrete solution satisfies the system
\bq\label{eq:torus1D_discrete4}
L^h(x_i,u^h(x_i),u^h(x_i)-u^h(\cdot)) = f^h(x_i) - h(1+x_i)v^h(x_0), \quad i = 0, \ldots, n-1.
\eq
This is consistent at all grid points since the first-step solution $v^h$ is uniformly bounded (Lemma~\ref{lem:properBounds}).  Moreover, the resulting solution automatically satisfies the uniqueness condition $u^h(0) = 0$ by construction.

\subsection{The Dirichlet Problem}
We are interested in establishing error bounds for solutions of~\eqref{eq:torus1D_discrete4} (and, of course, generalizations to non-trivial higher-dimensional problems).  To gain intuition and inspiration, we first review a standard approach to establishing error bounds for monotone schemes approximating the Dirichlet problem.

Consider as an example Poisson's equation with Dirichlet boundary conditions on a domain $\Omega\subset\R^d$.
\begin{equation}
\begin{cases}
- \Delta u(x) + f(x) = 0, & x \in \Omega \\
u(x) - g(x) = 0, &  x \in \partial \Omega.
\end{cases}
\end{equation}

Suppose, in addition, that we have a consistent, monotone, {proper linear} discretization scheme
\begin{equation}
\begin{cases}
L^h(x,u^h(x),u^h(x)-u^h(\cdot)) + f^h(x) = 0, & x \in \Omega\cap\G^h \\
u^h(x) - g(x) = 0, & x \in \partial \Omega \cap\G^h
\end{cases}
\end{equation}
with truncation error on the exact solution given by
\[ L^h(x,u(x),u(x)-u(\cdot)) + f^h(x) = \tau^h(x), \quad \abs{\tau^h(x)} \leq Ch^\alpha. \]

Let $z^h = u - u^h$ denote the solution error. We notice that $z^h$ satisfies the discrete system
\bq\label{eq:errorEqn}
\begin{cases}
L^h(x,z^h(x),z^h(x)-z^h(\cdot)) = \tau^h(x), & x \in \Omega\cap\G^h\\
z^h(x) = 0, & x \in \partial\Omega\cap\G^h.
\end{cases}
\eq 

If the discrete linear operator and the underlying grid are sufficiently structured, we may be able to explicitly determine its eigenvectors and eigenvalues.  In this case, we immediately obtain error bounds via
\bq\label{eq:matrixNorm} \|z^h\| \leq \|(L^h)^{-1}\| \|\tau^h\|. \eq

If the discrete problem does not have a simple enough structure, we can instead choose some bounded $w$ such that 
\[
\begin{cases}
L^h(x,w(x),w(x)-w(\cdot)) \geq 1, & x \in \Omega\cap\G^h \\
w(x) = 0, & x \in \partial \Omega \cap\G^h.
\end{cases}
\]
This can always be accomplished for a consistent approximation of a well-posed {boundary value problem}.  For example, we may choose $w$ to be the solution of the homogeneous Dirichlet problem
\bq\label{eq:wDirichlet}
\begin{cases}
-\Delta w(x) = \frac{3}{2}, & x \in \Omega\\
w(x) = 0, & x \in \partial\Omega.
\end{cases}
\eq

{
We now substitute the function $Ch^\alpha w$ into the discrete operator.  By linearity, we find that for $x\in\Omega\cap\G^h$ we have
\begin{align*}
L^h(x,Ch^\alpha w(x),Ch^\alpha w(x)-Ch^\alpha w(\cdot)) &\geq Ch^\alpha\\
  &\geq \tau^h(x)\\
  &= L^h(x,z^h(x),z^h(x)-z^h(\cdot)).
\end{align*}
Since additionally $Ch^\alpha w(x) = z^h(x) = 0$ for $x\in\partial\Omega\cap\G^h$, we can appeal to the discrete comparison principle (Theorem~\ref{thm:discreteComparison}) to conclude that
\[  z^h(x) \leq Ch^\alpha w(x), \quad x \in \G^h.\]
A similar argument yields $z^h(x) \geq -Ch^\alpha w(x)$.  Combining these, we obtain the error bound
\bq\label{eq:errorDirichlet} \|z^h\|_{L^\infty(\Omega\cap\G^h)}  \leq C\|w\|_{L^\infty(\Omega)} h^\alpha. \eq
In other words, the solution error is proportional to the truncation error of the underlying approximation scheme.
}

\subsection{Error bounds on the 1D torus}
It is natural to try to adapt the techniques used for the Dirichlet problem to error bounds for PDEs on manifolds without boundary.  Indeed, we may attempt to interpret~\eqref{eq:torus1D} as the ``one-point'' Dirichlet problem
\[
\begin{cases}
-u''(x) = 0, & x \in \torus^1 \backslash \{0\}\\
u(x) = 0, & x = 0.
\end{cases}
\]
However, this is not a well-posed PDE and attempting to solve an analog of~\eqref{eq:wDirichlet} for the auxiliary function $w$ will not lead to a function that is smooth on the torus.

We might attempt to carry this argument through at the discrete level, noticing that the solution $u^h$ of~\eqref{eq:torus1D_discrete3} does satisfy the following discrete version of a one-point Dirichlet problem
\[
\begin{cases}
L^h(x_i,u^h(x_i),u^h(x_i)-u^h(\cdot)) = f^h(x_i) - h(1+x_i)v^h(x_0), & i = 1, \ldots, n-1\\
u^h(x_0) = 0.
\end{cases}
\]
The resulting discrete linear system involves a strictly diagonally dominant $M$-matrix.  However, standard bounds on the inverse of such a matrix~\cite{ChengHuang_Mmatrix} yield the estimate
\[ \|(L^h)^{-1}\|_\infty \leq \bO\left(\frac{1}{h}\right), \]
which cannot provide any convergence guarantees when substituted into~\eqref{eq:matrixNorm}.  The degradation of this bound as $h\to0$ is due to the fact that, the the scheme~\eqref{eq:torus1D_discrete4} is proper, it is not uniformly proper as $h\to0$.

Instead, we attempt to utilize the techniques outlined above, which requires us to construct a function $w^h$ satisfying the system
\bq\label{eq:discreteW}
\begin{cases}
L^h(x_i,w^h(x_i),w^h(x_i)-w^h(\cdot)) = 1, & i = 1, \ldots, n-1\\
w^h(x_0) = 0.
\end{cases}
\eq
As this is a proper scheme, it does admit a unique solution.  However, the numerically obtained solution is not uniformly bounded as the grid is refined (Figure~\ref{fig:w}) and the resulting estimate in~\eqref{eq:errorDirichlet} does not provide a useful error bound.

\begin{figure}[htp]
	\centering
	\includegraphics[width=0.45\textwidth]{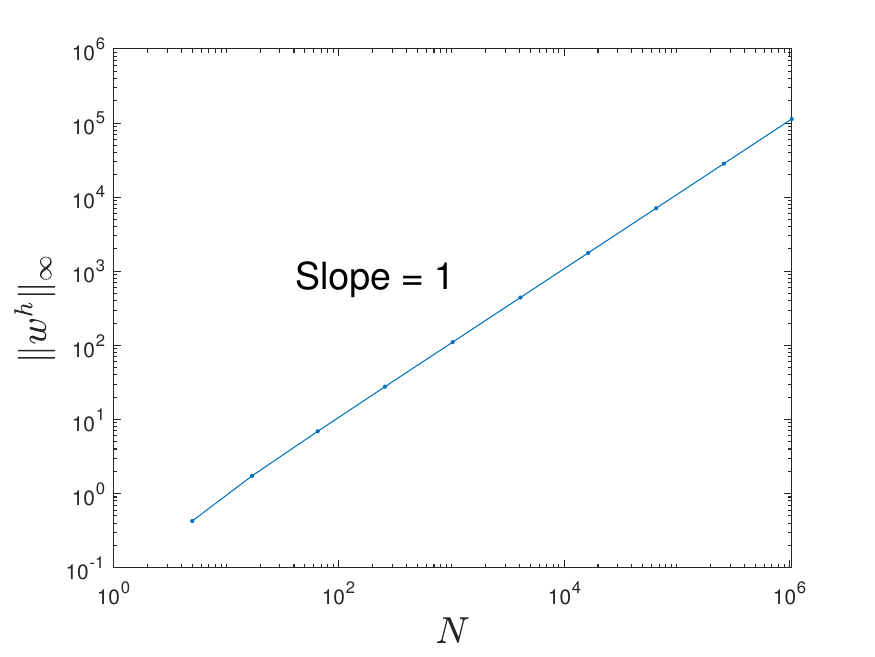}
	\caption{The maximum norm of the auxiliary function $w^h$ obtained from~\eqref{eq:discreteW}.}
	\label{fig:w}
\end{figure}

The approach we will use in \autoref{sec:convergence} to obtain error bounds involves effectively expanding the ``Dirichlet'' condition $u(x_0) = 0$ onto a larger set, which shrinks to a point as the grid is refined.  A downside to this approach is that it degrades the error bounds from the size of the truncation error $h^\alpha$ to the asymptotically worse rate of $h^{\alpha/(d+1)}$.  Surprisingly, though, our simple one-dimensional example indicates that this may be the best we can hope for.

Consider again the discrete solution $u^h$ obtained by solving~\eqref{eq:torus1D_discrete4}, which has a truncation error of $\bO(h)$ at all grid points on the one-dimensional torus and exactly satisfies the uniqueness condition $u^h(0) = 0$.  We solve this system numerically and present the error in Figure~\ref{fig:converge1}. This example does display numerical convergence to the true solution .  However, the observed accuracy is only $\bO(\sqrt{h})$, {which is asymptotically worse than the formal consistency error}.

%
%
%

\begin{figure}[htp]
\centering
	\includegraphics[width=0.45\textwidth]{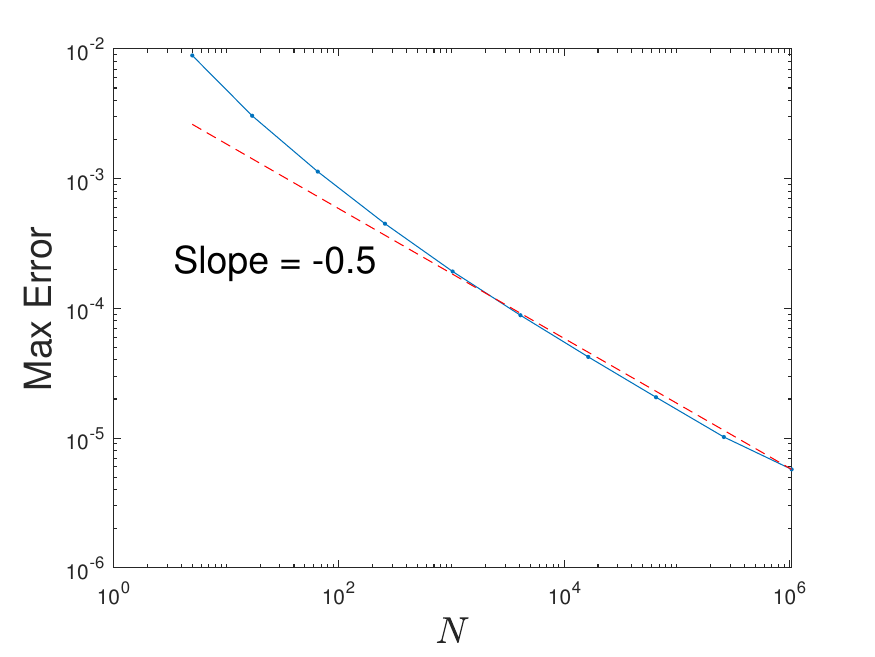}
	\caption{Maximum error in the solution of~\eqref{eq:torus1D_discrete4} on $\torus^1$.}
	\label{fig:converge1}
\end{figure}

{It is also interesting to compare the structure of the error (for fixed $n$) for the results of the non-convergent scheme~\eqref{eq:torus1D_discrete2} and the convergent scheme~\eqref{eq:torus1D_discrete4}. See Figure~\ref{fig:torus1D_errors}.  We notice that the error is approximately periodic: it is zero at the point $x_0$ (where $u(x_0)=0$ is enforced), and close to zero every $\sqrt{n}$ grid points thereafter, where the wide stencil scheme most strongly sees this condition.  At other grid points, the influence of this constraint seems to be felt more weakly. The scheme~\eqref{eq:torus1D_discrete4}, which better spreads out the effects of the solvability condition, also seems to allow this constraint to be felt more strongly so that the amplitude of the error decays as the grid is refined. 
}

{
\begin{figure}%
\centering
\subfigure[]{\includegraphics[width=0.45\textwidth]{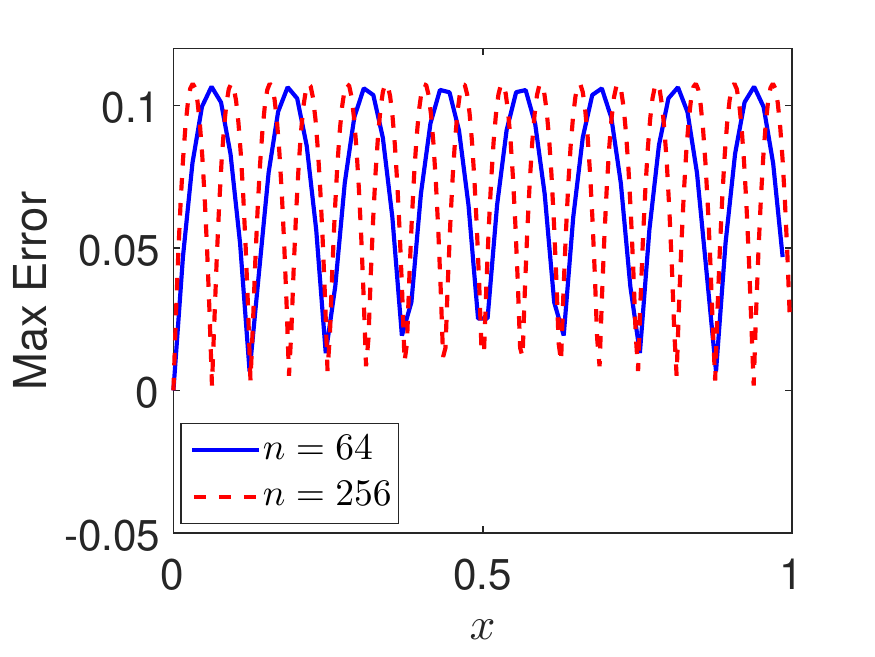}\label{fig:error_ubad}}
\subfigure[]{\includegraphics[width=0.45\textwidth]{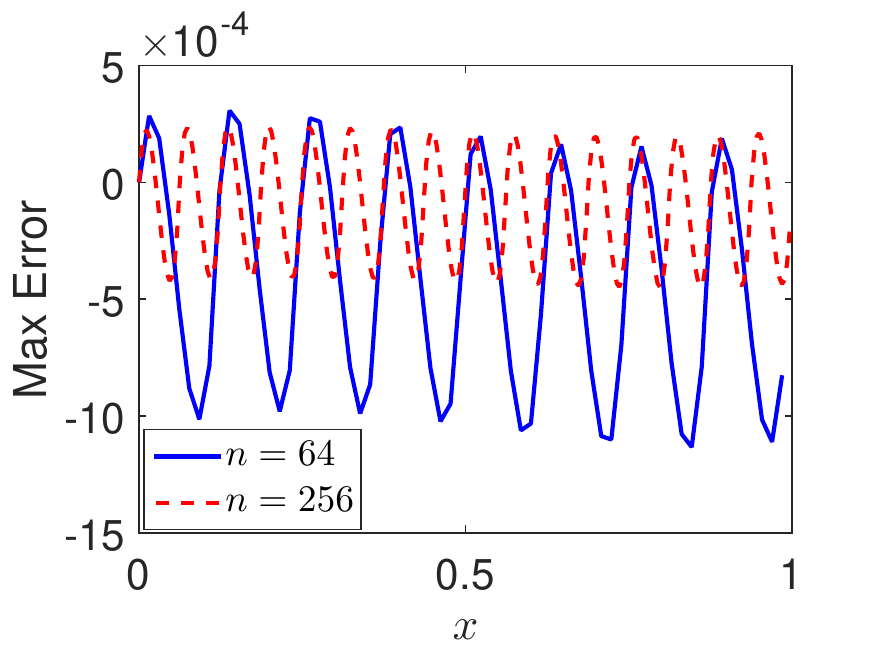}\label{fig:error_ugood}}
\caption{Error in the solutions of \subref{fig:error_ubad}~\eqref{eq:torus1D_discrete2} and \subref{fig:error_ugood}~\eqref{eq:torus1D_discrete4} for $n=64$ and $n=256$.}%
\label{fig:torus1D_errors}%
\end{figure}
}

{It appears from this simple example that computing on a manifold without boundary can lead to a reduction in the expected accuracy of a finite difference method.  This motivates us to consider in \autoref{sec:convergence} an alternate approach to ``spreading out'' the effects of the solvability condition by also ``spreading out'' the uniqueness condition $u^h(x_0)=0$ on a neighborhood of $x_0$ instead of at a single point.  The size of the neighborhood provides an immediate limit to the accuracy that can be achieved using this approach.  However, our main result (Theorem~\ref{thm:mainconvergence}) provides an error bound that is consistent with the empirical rates of convergence observed in this section for more traditional finite difference methods.}

\subsection{Convergence of gradients}
Finally, we recall the important and well-known fact in numerical analysis that pointwise convergence of an approximation does not imply convergence of gradients.  In particular, if we consider the approximation $u^h$ obtained by solving~\eqref{eq:torus1D_discrete3}, we might try to obtain information about the solution derivative by using the standard centered difference scheme
\[ u'(x_i) {=} \frac{u^h(x_{i+1})-u^h(x_{i-1})}{2h} {+ \bO(h^2)}. \]
However, this fails to converge to the true solution derivative $u'(x) = 0$ as the grid is refined; see Figure~\ref{fig:deriv1D}.

\begin{figure}%
\centering
\includegraphics[width=0.45\textwidth]{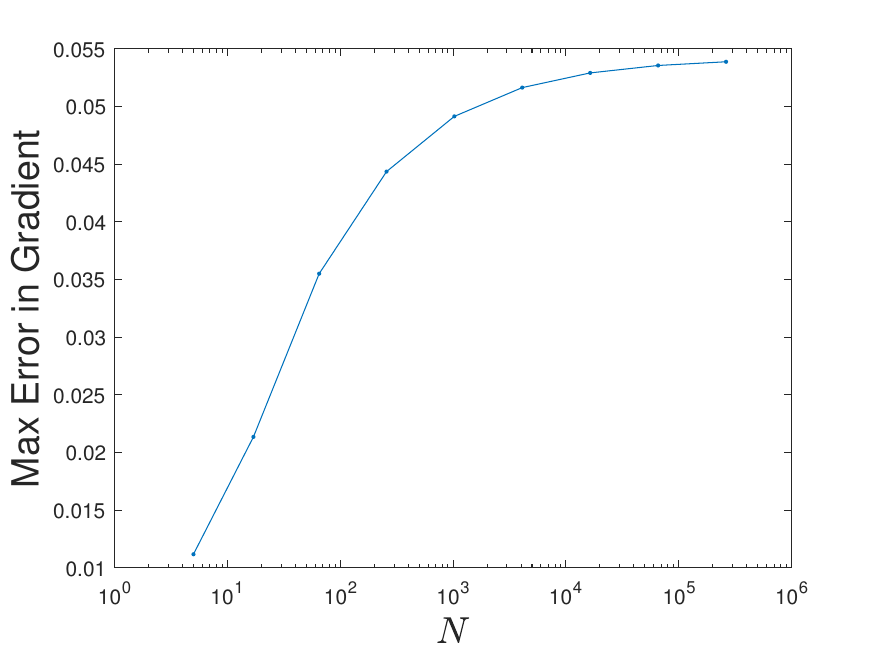}%
\caption{Maximum error in a centered difference approximation of $u'(x)$ obtained from the solution of the scheme~\eqref{eq:torus1D_discrete3}.}%
\label{fig:deriv1D}%
\end{figure}

This non-convergence is perhaps unsurprising given that $u^h$ is a low-accuracy approximation to $u$.  Indeed, a closer look at the centered difference scheme reveals that
\[ \frac{u^h(x_{i+1})-u^h(x_{i-1})}{2h} = \frac{u(x_{i+1})-u(x_{i-1}) + \bO(\sqrt{h})}{2h} {=u'(x_i) + \bO(h^2+h^{-1/2})}. \]
The theoretical error of this approximation is potentially as large as $\bO\left(h^{-1/2}\right)$, which is unbounded as $h\to0$.

Nevertheless, the numerical solution $u^h$ does still contain information about the true solution derivative.  In order to obtain this, we will require approximations of the gradient that utilize sufficiently wide stencils to overcome potential high-frequency components in the solution error.  This has the effect of making the size of the denominator in the finite difference approximation larger than the solution error, which leads to a convergent approximation as $h\to0$. This idea will be developed in \autoref{sec:mapping}.

\section{Convergence Rate Bounds}\label{sec:convergence}
We now establish error bounds for a class of consistent, monotone approximations schemes for~\eqref{eq:PDE},~\eqref{eq:uniqueness}.  The main result is presented in Theorem~\ref{thm:mainconvergence}.  The approach we take here is to construct barrier functions, which are shown to bound the error via the discrete comparison principle.  Importantly, the error estimates we obtain are consistent with the empirical convergence rates observed in~\autoref{sec:empirical}.

\subsection{Hypotheses on Geometry and PDE}

We begin with the hypotheses on the geometry $M$ and PDE~\eqref{eq:PDE} that are required by our convergence result.

\begin{hypothesis}[Conditions on PDE and manifold]
\label{hyp:convergence}
The Riemannian manifold $M$ and PDE~\eqref{eq:PDE} satisfy:
\begin{enumerate}
\item The manifold $M$ is a {$C^\infty$} compact and connected orientable {$d$-dimensional} surface without boundary.
\item The matrix $A(x)\in C^2(M)$ is symmetric positive definite.
\item The function $f(x)\in C^1(M)$ satisfies $\int_M f(x)\,dx = 0$.
\end{enumerate}
\end{hypothesis}

\begin{remark}
The compactness of the manifold $M$ implies that it is geodesically complete,
 has injectivity radius strictly bounded away from zero, and that the sectional curvature (equivalent to the Gaussian curvature in 2D) is bounded from above and below~\cite{LeeManifolds}. 
\end{remark}

\subsection{Approximation Scheme}
Next, we describe the class of approximation schemes that are covered by our convergence result.  The starting point of the scheme is the idea that the uniqueness constraint~\eqref{eq:uniqueness} should be posed at the point $x_0$, with a reasonable discrete approximation of the PDE posed on other grid points.  However, as discussed in \autoref{sec:empirical}, this approach may not yield a convergent scheme.  Instead, we will create a small cap around $x_0$ and fix the values of $u$ at all points in this cap.

To construct an appropriate scheme, we begin with any finite difference approximation $L^h(x,u(x)-u(\cdot))$ of the PDE operator~\eqref{eq:PDEoperator} that is defined for $x\in\G^h$ and that satisfies the following hypotheses.
\begin{hypothesis}[Conditions on discretization scheme]
\label{hyp:scheme}
We require the scheme $L^h$ to satisfy the following conditions:
\begin{enumerate}
\item $L^h$ is linear in its final argument.
\item $L^h$ is monotone.
\item There exist constants $C, \alpha>0$ such that for every smooth $\phi\in C^{2,1}(M)$ the consistency error is bounded by
\[
\left\vert L^h(x,\phi(x)-\phi(\cdot)) - {\Lf[\phi](x)} \right\vert \leq C[\phi]_{C^{2,1}(M)} h^{\alpha}, \quad x \in \G^h.
\]
%
\end{enumerate}
\end{hypothesis}

Next we define some regions in the manifold $M$ that will be used to create ``caps'' where $u$ is fixed in this scheme, and where additional conditions will be posed on barrier functions.  Choose any $0<\gamma < \alpha$. Define the regions
\begin{align*}
b^h &= \left\{x \in M \mid d_{M}(x,x_0) < h^{\gamma} \right\}\\
S^h &= \left\{x \in M \mid h^{\gamma} \leq d_{M}(x,x_0) \leq 2h^{\gamma} \right\}\\
B^h &= M \setminus (b^h \cup S^h).
\end{align*}
See Figure~\ref{fig:setup}.

\begin{figure}[htp]
\centering
	\includegraphics[width=0.8\textwidth]{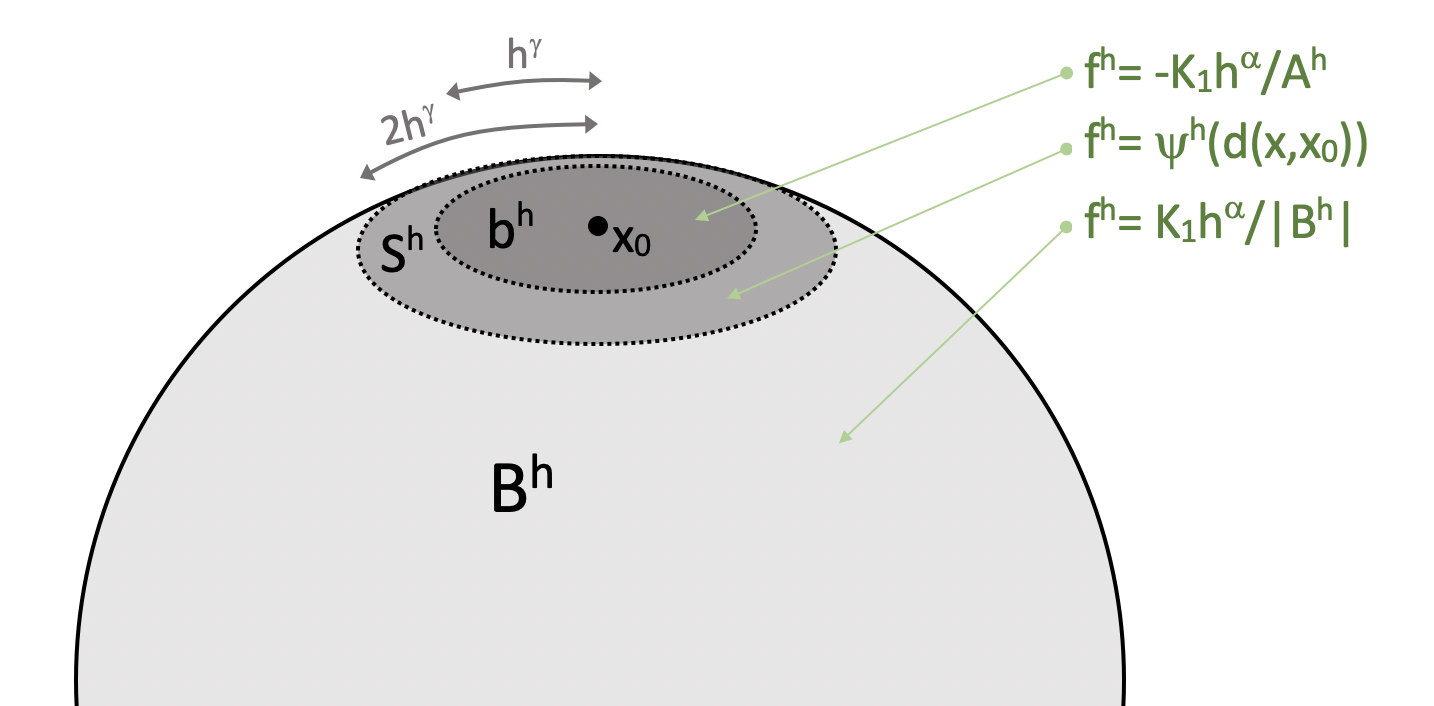}
	\caption{The construction of a small cap about $x_0$ on the manifold $M$}
	\label{fig:setup}
\end{figure}

We then define the modified scheme $F^h$ as follows:
\begin{equation}\label{eq:schemeDef}
F^h(x,u(x),u(x)-u(\cdot)) \equiv
\begin{cases}
L^h(x,u(x)-u(\cdot)) + h^{\alpha} u(x) + f(x), & x \in B^h\cap\G^h \\
u(x), & x \in (S^h\cup b^h) \cap \G^h.
\end{cases}
\end{equation}

\begin{remark}
{
Solving $F^h(x,u^h(x),u^h(x)-u^h(\cdot))=0$ has the effect of forcing $u^h(x)=0$ on the entire cap $S^h\cup b^h$.  This can 
be relaxed provided the local Lipschitz constant of $u^h$ in this cap is uniformly bounded as $h\to0$.  Pinning the value to zero has the particularly strong effect of setting the local Lipschitz constant to zero.
}
\end{remark}

Note that the discretization $F^h$ is automatically proper by construction. Therefore, this scheme has a uniformly bounded solution by Lemma~\ref{lem:properBounds}. 

\begin{lemma}\label{thm:boundedness}
Under the assumptions of Hypothesis~\ref{hyp:convergence},\ref{hyp:scheme}, the discrete scheme
\bq\label{eq:scheme} F^h(x,u^h(x),u^h(x)-u^h(\cdot)) = 0\eq
has a unique solution $u^h$ that is bounded uniformly independent of $h$ for sufficiently small $h>0$.
%
%
\end{lemma}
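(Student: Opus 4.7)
The plan is to verify that $F^h$ falls within the framework of \cite{ObermanSINUM} to obtain existence and uniqueness, and then to obtain the uniform bound by a barrier argument modeled on the proof of Lemma~\ref{lem:properBounds}, using the exact $C^{2,1}$ PDE solution (whose existence is guaranteed under Hypothesis~\ref{hyp:convergence}) together with an additive constant.

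First I would check directly from the two cases in \eqref{eq:schemeDef} that $F^h$ is continuous, monotone, and proper. On $B^h\cap\G^h$ the discretization $L^h$ is linear (hence continuous) and monotone in its final argument by Hypothesis~\ref{hyp:scheme}, and the added term $h^\alpha u(x)$ is continuous and strictly increasing in $u$, so $F^h$ is continuous, monotone, and proper there. On $(S^h\cup b^h)\cap\G^h$ we have $F^h=u$, which is continuous, independent of the differences $u(x)-u(\cdot)$, and strictly increasing in $u$, so monotonicity and properness are trivial. Existence then follows from \cite[Theorem 8]{ObermanSINUM}, and uniqueness follows from the discrete comparison principle (Theorem~\ref{thm:discreteComparison}) applied to any two solutions.

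For the uniform bound, let $u\in C^{2,1}(M)$ denote the exact solution of \eqref{eq:PDE}, \eqref{eq:uniqueness}, so that $L(x,\nabla_M u,D_M^2 u)=-f(x)$ on $M$. By the consistency assumption in Hypothesis~\ref{hyp:scheme}, there is a constant $K$, independent of $h$, such that
\[
\bigl| L^h(x,u(x)-u(\cdot)) + f(x) \bigr| \leq K h^\alpha, \quad x\in\G^h.
\]
Choose $M_0 = \|u\|_{L^\infty(M)} + K + 1$ and consider the barrier $u^{\pm}(x) = u(x) \pm M_0$. On $B^h\cap\G^h$, since $L^h$ is linear in its final argument, we compute
\[
F^h(x, u^+(x), u^+(x)-u^+(\cdot)) = L^h(x,u(x)-u(\cdot)) + h^\alpha(u(x)+M_0) + f(x) \geq -Kh^\alpha + h^\alpha(-\|u\|_{L^\infty(M)}+M_0) \geq 0,
\]
and the analogous computation gives $F^h(x,u^-(x), u^-(x)-u^-(\cdot))\leq 0$. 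On $(S^h\cup b^h)\cap\G^h$, the scheme reduces to the pointwise value, so $F^h(x,u^+(x),\cdot) = u(x)+M_0 \geq 0$ and $F^h(x,u^-(x),\cdot)=u(x)-M_0\leq 0$ by the choice of $M_0$. Applying Theorem~\ref{thm:discreteComparison} twice, against the discrete solution $u^h$ of \eqref{eq:scheme}, yields $u(x)-M_0 \leq u^h(x) \leq u(x)+M_0$, and hence $\|u^h\|_{L^\infty(\G^h)} \leq \|u\|_{L^\infty(M)} + M_0$ independently of $h$.

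The only mildly subtle point is that the scheme is not consistent with the PDE on the cap $S^h\cup b^h$, so one cannot simply quote Lemma~\ref{lem:properBounds} verbatim. However, because $F^h$ on the cap only compares the nodal value of the test function to zero, the constant shift $M_0\geq\|u\|_{L^\infty(M)}$ is by itself enough to make the exact solution a valid barrier there, and consistency is only needed on $B^h$ where it is available. I do not anticipate any further genuine obstacle; the construction essentially mimics the Lemma~\ref{lem:properBounds} proof while being careful about the two-piece definition of $F^h$.
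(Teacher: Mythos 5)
Your proposal is correct, and in fact it is more careful than the paper's own justification. The paper simply observes that $F^h$ is proper by construction and then cites Lemma~\ref{lem:properBounds} to conclude. You correctly flag the subtlety that the consistency hypothesis of Lemma~\ref{lem:properBounds} is \emph{not} literally satisfied by the two-piece scheme~\eqref{eq:schemeDef}: on $(S^h\cup b^h)\cap\G^h$ the scheme collapses to the identity $u(x)$, which is not an $\bO(h^\alpha)$-consistent approximation of the elliptic operator, so the lemma cannot be quoted verbatim. Your fix --- reproducing the barrier argument from the proof of Lemma~\ref{lem:properBounds} but splitting into the two cases of~\eqref{eq:schemeDef}, using consistency only on $B^h\cap\G^h$ and noting that on the cap the constant shift $M_0\geq\|u\|_{L^\infty(M)}$ alone suffices --- is exactly what is needed, and the computations check out: $F^h[u\pm M_0]\gtrless 0$ at every grid point, and the discrete comparison principle sandwiches $u^h$ between $u-M_0$ and $u+M_0$. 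This is the same underlying mechanism (properness plus the discrete comparison principle) the paper relies on, but you make explicit the case-split that the paper glosses over.
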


\subsection{Convergence Rates}
The idea in this section is to establish the convergence of the discrete solution of a monotone (and proper) scheme to the unique solution of the underlying PDE. We accomplish this by constructing a barrier function $\phi^h$ such that
{
\begin{equation}
F^h(x,-\phi^h(x),-(\phi^h(x)-\phi^h(\cdot))) \leq F^h(x,z^h(x),z^h(x)-z^h(\cdot)) \leq F^h(x,\phi^h(x),\phi^h(x)-\phi^h(\cdot))
\end{equation}
where $z^h(x) = u^h(x)-u(x)$ is the solution error.
We then by invoking the discrete comparison principle to conclude that
\begin{equation}
-\phi^h \leq z^h \leq \phi^h.
\end{equation}
}

The barrier function can be chosen to satisfy $\phi^h = \mathcal{O} \left( h^{\alpha/(d+1)} \right)$. 
In Section~\ref{sec:empirical}, we saw for $\mathbb{T}_1$ that the empirical convergence rate was $\mathcal{O} \left( h^{\alpha/2} \right)$, which is consistent with our theoretical error bound when $d=1$. The factor $(d+1)$ appears because there is a contribution of $d$ from the dimension of the underlying manifold (which arises due to the solvability condition~\eqref{eq:solvability}), and a contribution of $1$ from deriving a Lipschitz bound (also constrained by the solvability condition). Thus, we see that it is the solvability condition on the manifold without boundary that leads to the reduced convergence rate overall of a monotone and proper discretization.

We state the main convergence result:

\begin{theorem}[Convergence Rate Bounds]\label{thm:mainconvergence}
Under the assumptions of Hypotheses~\ref{hyp:convergence} and~\ref{hyp:scheme}, let $u \in C^{2,1}(M)$ be the solution of~\eqref{eq:PDE}, ~\eqref{eq:uniqueness}. Then {for sufficiently small $h>0$} the discrete solution $u^h$ solving~\eqref{eq:scheme}  {with $\gamma = \alpha/(d+1)$} satisfies
\begin{equation}
\left\Vert u^h - u \right\Vert_{L^\infty(\G^h)} \leq Ch^{\alpha / {(d+1)}}.
\end{equation}
where $C>0$ is a constant independent of $h$.
\end{theorem}







\subsubsection{Construction of barrier functions}
We now define the barrier functions $\phi^h$ by solving a linear PDE on the manifold $M$ with an appropriately chosen (small) right-hand side $f^h$ that satisfies the solvability condition~\eqref{eq:solvability}. In particular, given a fixed $K_0>0$ (which will be determined later), we let $\phi^h$ be the solution of the PDE
\begin{equation}\label{eq:phiplus}
\begin{cases}
\mathcal{L} [\phi^h](x) = f^h(x), \quad x \in M \\
\phi^h (x_0) = K_0 h^{\gamma}.
\end{cases}
\end{equation}
We emphasize that while the barrier function ${\phi}^h$ depends on the grid parameter $h$, it is the solution of the PDE on the continuous level.
%

Now we outline the construction of an appropriate function $f^h$; see Figures~\ref{fig:setup} and~\ref{fig:barrier} for two complementary visualizations of the resulting function $f^h(x)$.  Let $K_1>0$ be a fixed constant, to be determined later.  We let $\abs{U} = \int_U dx$ denote the volume of a set $U\subset M$ and note that
\[ \abs{B^h} = \bO(1), \quad \abs{S^h},\,\abs{b^h} = \bO(h^{{d}\gamma}). \]
We define the following real numbers
\begin{align*}
Q^h &= \int_{S^h} \cos\left(\pi\frac{d(x,x_0)-h^\gamma}{h^\gamma}\right)\,dx\\
A^h &= \abs{B^h}\frac{2\abs{b^h}+\abs{S^h}+Q^h}{2\abs{B^h}+\abs{S^h}-Q^h}.
\end{align*}
We record the fact that $\abs{Q^h} \leq \abs{S^h} = \bO(h^{{d}\gamma})$ and $A^h   \geq ch^{{d}\gamma}$ for some $c>0$.
Finally, we introduce a smooth cutoff function
\[ \psi^h(t) = -\frac{K_1h^\alpha}{2}\left(\frac{1}{A^h} + \frac{1}{\abs{B^h}}\right)\cos\left(\pi\frac{t-h^\gamma}{h^\gamma}\right) + \frac{K_1h^\alpha}{2}\left(\frac{1}{\abs{B^h}}- \frac{1}{A^h}\right).\]

Now we define the right-hand side function by
\bq\label{eq:fh}
f^h(x) = 
\begin{cases}
\dfrac{K_1h^\alpha}{\left\vert B^h \right\vert}, & x \in B^h \\
\psi^h(d(x,x_0)), & x \in S^h \\
-\dfrac{K_1h^\alpha}{A^h}, & x \in b^h.
\end{cases}
\eq
In particular, this is chosen to be on the order of the local truncation error of~\eqref{eq:schemeDef} throughout most of the domain, but is allowed to take on larger values in the small cap $S^h\cup b^h$ in order to ensure the solvability condition is satisfied.  See Figure~\ref{fig:barrier}.


\begin{figure}[htp]
	\includegraphics[width=\textwidth]{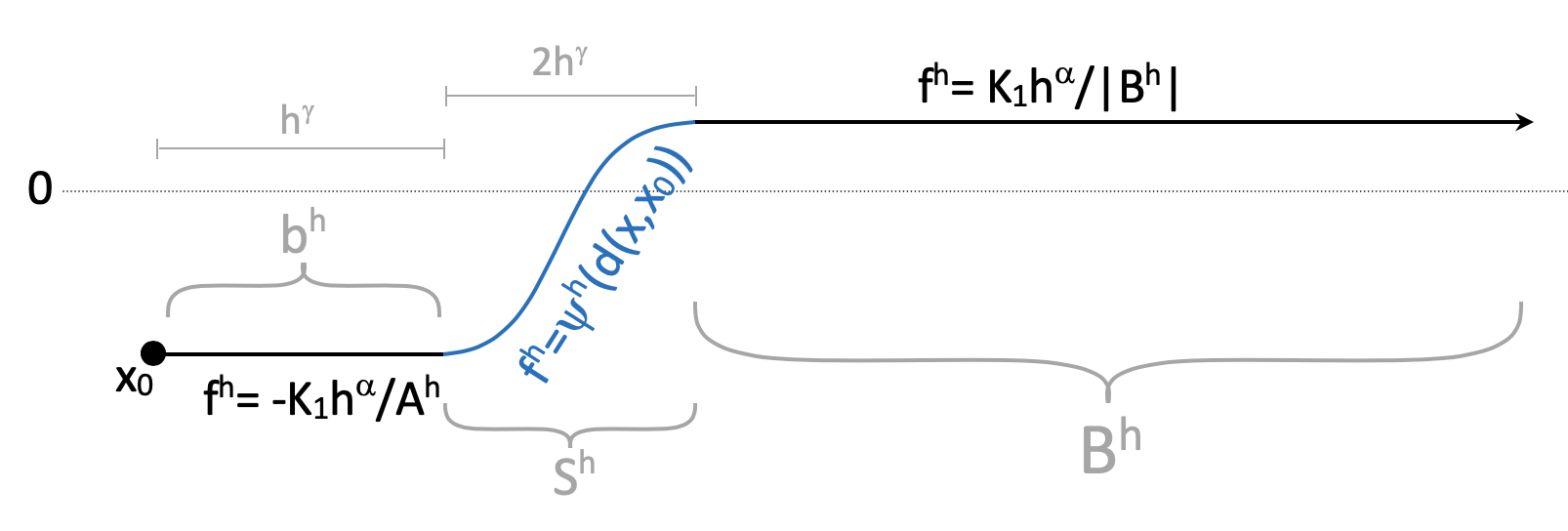}
	\caption{The construction of the function $f^h_{\pm}$ from a ``side profile" parametrized by distance from the point $x_0$}
	\label{fig:barrier}
\end{figure}

\subsubsection{Properties of the barrier function equation}
Next we verify several key properties of the right-hand side function $f^h$, which will in turn be used to produce estimates on the barrier function $\phi^h$.

\begin{lemma}[Mean-zero]\label{lem:meanZero}
For every sufficiently small $h>0$, the function $f^h$ defined in~\eqref{eq:fh} satisfies the solvability condition~\eqref{eq:solvability}
\[ \int_M f^h(x)\,dx = 0. \]
\end{lemma}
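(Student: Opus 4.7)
The plan is a direct computation: split the integral over the partition $M = B^h \sqcup S^h \sqcup b^h$, evaluate each piece using the explicit formula for $f^h$, and verify algebraically that the resulting expression vanishes thanks to the specific choice of the normalization constant $A^h$. There is no real analytic content here; the whole point is that $A^h$ was reverse-engineered so that the three contributions cancel. The main thing to be careful about is that $A^h$ is well-defined and strictly positive for small $h$, which requires only observing that $\abs{B^h}\to\abs{M}>0$ while $\abs{S^h},\abs{Q^h}=\bO(h^{2\gamma})\to 0$, so the denominator $2\abs{B^h}+\abs{S^h}-Q^h$ is positive once $h$ is small enough.

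First I would handle the two piecewise-constant pieces. Since $f^h\equiv K_1h^\alpha/\abs{B^h}$ on $B^h$ and $f^h\equiv -K_1h^\alpha/A^h$ on $b^h$, the integrals over these regions are exactly $K_1h^\alpha$ and $-K_1h^\alpha\abs{b^h}/A^h$, respectively.

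Next I would integrate $\psi^h(d(x,x_0))$ over $S^h$. Expanding $\psi^h$ and using the definition of $Q^h$ to evaluate the cosine integral gives
\[
\int_{S^h} f^h(x)\,dx = -\frac{K_1h^\alpha}{2}\!\left(\frac{1}{A^h}+\frac{1}{\abs{B^h}}\right)Q^h + \frac{K_1h^\alpha}{2}\!\left(\frac{1}{\abs{B^h}}-\frac{1}{A^h}\right)\abs{S^h}.
\]
Summing the three contributions and dividing by $K_1h^\alpha$, I would reduce the required identity $\int_M f^h = 0$ to
\[
\frac{2\abs{b^h}+\abs{S^h}+Q^h}{2A^h} \;=\; 1 + \frac{\abs{S^h}-Q^h}{2\abs{B^h}} \;=\; \frac{2\abs{B^h}+\abs{S^h}-Q^h}{2\abs{B^h}}.
\]
This is precisely the defining relation $A^h=\abs{B^h}(2\abs{b^h}+\abs{S^h}+Q^h)/(2\abs{B^h}+\abs{S^h}-Q^h)$, so the verification is immediate.

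The only step that might be called an ``obstacle'' is confirming that this algebra is carried out under the standing hypothesis that $A^h$ is well-defined. I would address this at the outset: by Hypothesis~\ref{hyp:convergence} the manifold has finite positive area, so $\abs{B^h}\to\abs{M}$; the bound $\abs{Q^h}\leq\abs{S^h}=\bO(h^{2\gamma})$ noted in the text then shows that for small enough $h$ the denominator of $A^h$ is positive and in fact $A^h\geq c h^{2\gamma}$ as claimed, so that $f^h$ is well-defined on $b^h$ and the preceding computation is valid.
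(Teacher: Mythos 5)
Your proposal is correct and follows essentially the same route as the paper: split $\int_M f^h$ over $B^h$, $S^h$, and $b^h$, use the definition of $Q^h$ to integrate the cosine term, and observe that the definition of $A^h$ was engineered exactly so that the three contributions cancel. Your preliminary remark that $A^h$ is well-defined and positive for small $h$ is also stated in the text just before the lemma, so this is a minor presentational difference rather than a different argument.
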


\begin{proof}
We can directly compute
\begin{align*}
\int_{M} f^h(x)dx 
 &= \int_{B^h} \frac{K_1 h^{\alpha}}{\left\vert B^h \right\vert} dx + \int_{S^h} \psi^h \left( d(x,x_0) \right)dx - \int_{b^h} \frac{K_1 h^{\alpha}}{A^h} dx\\
 &= K_1 h^{\alpha} \left( 1 - \frac{Q^h}{2} \left(\frac{1}{A^h} + \frac{1}{\left\vert B^h \right\vert}  \right) + \frac{\left\vert S^h \right\vert}{2} \left(\frac{1}{\left\vert B^h \right\vert} - \frac{1}{A^h} \right)- \frac{\left\vert b^h \right\vert}{A^h} \right)\\
&=\frac{K_1 h^{\alpha}}{A^h \left\vert B^h \right\vert} \left( A^h \left\vert B^h \right\vert - \frac{Q^h}{2}  \left(\left\vert B^h \right\vert + A^h\right) + \frac{\abs{S^h}}{2}  \left( A^h -
 \left\vert B^h \right\vert\right) - \left\vert b^h \right\vert \left\vert B^h \right\vert \right)\\
&= \frac{K_1 h^{\alpha}}{A^h \left\vert B^h \right\vert} \left(\frac{A^h}{2}\left(2\abs{B^h}+\abs{S^h}-Q^h\right) - \frac{\abs{B^h}}{2}\left(2\abs{b^h}+\abs{S^h}+Q^h\right)\right).
\end{align*}

Then by substituting in the value of $A^h$, we obtain
\[\int_{M} f^h(x)dx = 0.
\]
\end{proof}

\begin{lemma}[Regularity of right-hand side]\label{lem:fhC1}
For every sufficiently small $h>0$, $f^h \in C^1(M)$. {Moreover, $\|\nabla_M f^h \|_{L^\infty(M)} = \bO(h^{\alpha-(d+1)\gamma})$.}
\end{lemma}
\begin{proof}
First we recall that $f^h$ is constant in the regions $b^h$ and $B^h$ respectively.  In the region $S^h$, we can easily verify that
\[ \lim\limits_{d(x,x_0) \downarrow h^{\gamma}} \psi^h \left( d(x,x_0) \right) = -\frac{K_1 h^{\alpha}}{A^h}, \quad \lim\limits_{d(x,x_0) \uparrow 2h^{\gamma}} \psi^h \left( d(x,x_0) \right) = \frac{K_1 h^{\alpha}}{\left\vert B^h \right\vert},\]
which coincide with the values in $b^h$ and $B^h$ respectively.

Next, we note that
\[
\frac{d}{dt} \psi^{h} (t) = \frac{\pi K_1 h^{\alpha}}{2 h^{\gamma}} \left( \frac{1}{A^{h}} + \frac{1}{\left\vert B^{h} \right\vert} \right) \sin \left( \pi \frac{t - h^{\gamma}}{h^{\gamma}} \right).
\]
Thus we readily verify that \[\lim\limits_{t \downarrow h^{\gamma}} \frac{d}{dt}\psi^{h}(t) = 0, \quad\lim\limits_{t \uparrow 2h^{\gamma}} \frac{d}{dt}\psi^{h}(t) = 0.\] 

Finally, we produce an explicit Lipschitz bound.
\begin{align*}
\left\vert \nabla_{M} \psi^h (d(x,x_0)) \right\vert  
 &\leq \max\limits_t \left\vert \frac{d}{dt} \psi^h(t) \right\vert \\
&= \frac{\pi K_1 h^{\alpha}}{2 h^{\gamma}} \left( \frac{1}{A^{h}} + \frac{1}{\left\vert B^{h} \right\vert} \right).
\end{align*}
Using our previous observations about the size of $A^h$ and $\abs{B^h}$, we conclude that
\[ \left\vert \nabla_{M} f^h(x) \right\vert  \leq \frac{\pi K_1 h^{\alpha}}{2 h^{\gamma}} \left( \frac{1}{ch^{{d}\gamma}} + \frac{1}{\left\vert B^{h} \right\vert} \right) = \bO(h^{\alpha-{(d+1)}\gamma}).\]
\end{proof}

\begin{lemma}[$L^{{d}}$ norm bounds]\label{lem:fhL2}
There exists a constant $C>0$ such that for every sufficiently small $h>0$, 
\[ \|f^h\|_{L^{{d}}(M)} \leq Ch^{\alpha-{(d-1)}\gamma}. \]

\begin{proof}
We can directly compute
\begin{align*}
\left\Vert f^h \right\Vert_{L^{{d}}(M)} &\leq \left( \int_{S^h \cup b^h} \left( \frac{K_1 h^{\alpha}}{A^h} \right)^{{d}}dx +\int_{B^h} \left( \frac{K_1 h^{\alpha}}{\left\vert B^h \right\vert} \right)^{{d}} dx \right)^{1/{{d}}}\\
&= K_1 h^{\alpha} \left( \frac{\left\vert S^h \cup b^h \right\vert}{(A^h)^{{d}}}  + {\abs{B^h}^{1-d}} \right)^{1/{{d}}}\\
  &\leq Ch^{\alpha}\left(\frac{h^{{{d}}\gamma}}{h^{{{d^2}}\gamma}} +{\abs{B^h}^{1-d}}\right)^{1/{{d}}}.
\end{align*}

Here we have used the fact that $\left\vert S^h \cup b^h \right\vert = \bO(h^{{{d}}\gamma})$, $\abs{B^h}=\bO(1)$, and $A^h \geq c h^{{{d}}\gamma}$ for some constant $c>0$.  We conclude that
\[
\left\Vert f^h \right\Vert_{L^{{d}}(M)} {= \bO( h^{\alpha - {(d-1)}\gamma})}.
\]
\end{proof}
\end{lemma}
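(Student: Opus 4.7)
The plan is to estimate $\|f^h\|_{L^2(M)}^2$ by splitting the integration into the three pieces $B^h$, $S^h$, and $b^h$ on which $f^h$ is defined, and then using the order-of-magnitude estimates $|B^h| = \bO(1)$, $|S^h|,|b^h| = \bO(h^{2\gamma})$, and $A^h \geq c h^{2\gamma}$ already recorded just before the definition of $f^h$.

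First I would treat the two regions where $f^h$ is constant. On $B^h$, $(f^h)^2 = K_1^2 h^{2\alpha}/|B^h|^2$, so $\int_{B^h}(f^h)^2\,dx = K_1^2 h^{2\alpha}/|B^h| = \bO(h^{2\alpha})$. On $b^h$, $(f^h)^2 = K_1^2 h^{2\alpha}/(A^h)^2$, so $\int_{b^h}(f^h)^2\,dx = K_1^2 h^{2\alpha}|b^h|/(A^h)^2 \leq C h^{2\alpha}\cdot h^{2\gamma}/h^{4\gamma} = C h^{2\alpha - 2\gamma}$.

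Next, on the transition annulus $S^h$, I would bound $|\psi^h|$ uniformly. Since $|\cos(\cdot)| \leq 1$, the explicit formula
\[
\psi^h(t) = -\tfrac{K_1 h^\alpha}{2}\bigl(\tfrac{1}{A^h}+\tfrac{1}{|B^h|}\bigr)\cos\!\bigl(\pi\tfrac{t-h^\gamma}{h^\gamma}\bigr) + \tfrac{K_1 h^\alpha}{2}\bigl(\tfrac{1}{|B^h|}-\tfrac{1}{A^h}\bigr)
\]
gives $|\psi^h(t)| \leq K_1 h^\alpha\bigl(\tfrac{1}{A^h}+\tfrac{1}{|B^h|}\bigr) \leq C h^{\alpha - 2\gamma}$, where I used $A^h \geq c h^{2\gamma}$ and $|B^h| = \bO(1)$. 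Then
\[
\int_{S^h}(f^h)^2\,dx \leq C h^{2\alpha - 4\gamma}\cdot |S^h| \leq C h^{2\alpha - 4\gamma}\cdot h^{2\gamma} = C h^{2\alpha - 2\gamma}.
\]

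Summing the three contributions, the dominant term is $h^{2\alpha - 2\gamma}$ (since $\gamma > 0$, the $h^{2\alpha}$ term from $B^h$ is absorbed). Taking square roots yields $\|f^h\|_{L^2(M)} \leq C h^{\alpha - \gamma}$, as desired. There is no real obstacle here: the proof reduces to bookkeeping on the three pieces, and the only care needed is to extract the uniform $L^\infty$ bound on $\psi^h$ on $S^h$ before multiplying by the small measure $|S^h| = \bO(h^{2\gamma})$, which exactly cancels two of the four powers of $h^{-\gamma}$ coming from $1/A^h$.
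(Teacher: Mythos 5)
Your proof is correct and takes essentially the same approach as the paper: bound $f^h$ pointwise on each region using $A^h \geq c\,h^{2\gamma}$ and $|B^h|=\bO(1)$, multiply by the measures $|S^h|,|b^h|=\bO(h^{2\gamma})$, and take a square root. The only cosmetic difference is that you treat $S^h$ and $b^h$ separately while the paper merges them under the single pointwise bound $K_1 h^\alpha/A^h$ (which is sharp since $\psi^h$ is monotone from $-K_1 h^\alpha/A^h$ to $K_1 h^\alpha/|B^h|$); your slightly larger constant does not affect the order.
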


Using these properties of $f^h$, we are now able to establish existence of the barrier functions $\phi^h$.

\begin{lemma}[Existence of barrier function]\label{lem:phiExists}
There exists a function $\phi^h\in C^3(M)$ satisfying~\eqref{eq:phiplus}.
\end{lemma}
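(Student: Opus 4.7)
The plan is to obtain $\phi^h$ as a solution of the linear divergence-form elliptic problem on the compact manifold $M$ without boundary, using standard elliptic theory coupled with a bootstrap argument for regularity. The key ingredients have all been prepared: the solvability compatibility from Lemma~\ref{lem:meanZero}, the $C^1$ regularity of the source from Lemma~\ref{lem:fhC1}, and the $C^2$ regularity of $A$ from Hypothesis~\ref{hyp:convergence}.

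First I would establish existence of a weak solution. Because the operator $\mathcal{L}$ is self-adjoint and uniformly elliptic with $A \in C^2(M)$, Fredholm theory applied on the quotient space $H^1(M)/\mathbb{R}$ (see, for instance, Aubin, cited in the paper) gives a weak solution $\tilde\phi^h \in H^1(M)$ that is unique up to an additive constant, precisely because Lemma~\ref{lem:meanZero} guarantees $\int_M f^h\,dx = 0$. Next I would bootstrap this weak solution to classical regularity. Working in local coordinates via~\eqref{eq:localcoords}, the equation becomes a second-order linear elliptic PDE in divergence form with $C^2$ coefficients and $C^1$ right-hand side. Elliptic regularity (e.g., iterating $L^p$ or Schauder estimates) first yields $\tilde\phi^h \in H^3_{\mathrm{loc}}$, then $\tilde\phi^h \in C^{2,\beta}(M)$, and finally, since $f^h$ is in fact $C^{1,1}(M)$ (the function $\psi^h$ is smooth in its scalar argument, the distance $d(\cdot,x_0)$ is smooth throughout $S^h$ for sufficiently small $h$ because $S^h$ lies well inside the injectivity radius, and only the interface second derivatives fail to match), we conclude $\tilde\phi^h \in C^{3,\beta}(M) \subset C^3(M)$.

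Finally I would enforce the normalization $\phi^h(x_0) = K_0 h^\gamma$ by defining
\[
\phi^h(x) := \tilde\phi^h(x) + K_0 h^\gamma - \tilde\phi^h(x_0),
\]
which is meaningful because $\tilde\phi^h$ is continuous at $x_0$ by the previous step. Adding a constant leaves both the equation $\mathcal{L}[\phi^h] = f^h$ and the $C^3$ regularity intact, so $\phi^h$ satisfies~\eqref{eq:phiplus}.

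The main (and essentially only) technical point is the regularity bootstrap: one must verify that $f^h$ has sufficient Hölder regularity to reach $C^3$ rather than stopping at $C^{2,\beta}$. This is the one place the argument uses more than what Lemma~\ref{lem:fhC1} literally states, and would require a short observation that the explicit piecewise construction of $f^h$ yields a Lipschitz gradient uniformly in $h$ (with the $h$-dependent constants acceptable at this stage, since we only need qualitative existence here; the quantitative bounds are deferred to later lemmas).
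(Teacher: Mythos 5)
Your approach is correct and, at bottom, it is the same route the paper takes, just unpacked: the paper's proof is a one-line citation of a theorem in Aubin (existence of a solution to $\mathcal{L}\phi^h = f^h$ on the compact manifold, unique up to an additive constant, in a regularity class determined by that of $f^h$), followed by fixing the additive constant via the normalization at $x_0$; your Fredholm-on-$H^1(M)/\R$ step plus local elliptic bootstrap is precisely the content of that cited result. The one place you genuinely add something is your observation that $f^h$ is not merely $C^1$ but $C^{1,1}$ --- a point the paper's proof glosses over, since it writes ``$f^h \in C^1(M)$, so $\phi^h \in C^3(M)$,'' which as stated is one Hölder exponent short of what Schauder theory actually needs (recall that $f \in C^1$ with no Hölder modulus does not in general give $\phi \in C^3$). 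You correctly trace this to the explicit piecewise cosine construction of $f^h$: the second derivative of $\psi^h$ is bounded in $S^h$ and jumps only at the interfaces $d(x,x_0) \in \{h^\gamma, 2h^\gamma\}$, so the gradient of $f^h$ is Lipschitz and $f^h \in C^{1,1}(M) \subset C^{1,\beta}(M)$ for any $\beta < 1$, which is exactly what the bootstrap requires to land in $C^{3,\beta} \subset C^3$. In short: same argument as the paper, but more honest about the Hölder-regularity step; the paper implicitly leans on Lemma~\ref{lem:meanZero} for solvability and on more than the literal statement of Lemma~\ref{lem:fhC1} for the $C^3$ conclusion, and you make both dependencies explicit.
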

\begin{proof}
Recall that $f^h\in C^1(M)$ for any $h>0$.  Then by~\cite[Theorem 4.7]{aubin} we have the existence of a solution $\phi^h\in C^3(M)$ to the PDE \[\mathcal{L} \phi^h(x) = f^h(x),\] which is unique up to an additive constant. The condition $\phi^h(x_0) = K_0h^{\gamma}$ fixes the constant.
\end{proof}

\subsubsection{Local coordinate patches}
Our goal is to use regularity results for linearly elliptic PDEs in Euclidean space in order to develop estimates for the barrier function $\phi^h$, which solves a linearly elliptic PDE on the manifold $M$.  In order to do this, we will need the ability to locally re-express the barrier equation~\eqref{eq:phiplus} as a uniformly elliptic PDE in Euclidean space.  

\begin{lemma}[PDE on local coordinate patches]\label{lem:localPDE}
Under the assumptions of Hypothesis~\ref{hyp:convergence}, there exists some $r>0$ such that for every $x_0\in M$ there exists a bounded region $\Omega\subset\R^{{d}}$ and set of coordinates $y:\Omega\to B(x_0,r)$ corresponding to a metric tensor $G\in C^2(M)$ such that the PDE operator~\eqref{eq:PDEoperator} can be expressed as
\[ \mathcal{L}[\phi] = -\nabla\cdot\left((\det A)^{1/2} \nabla\phi\right). \]
{Here $B(x_0,r)\subset M$ denotes an open ball on the manifold.}
\end{lemma}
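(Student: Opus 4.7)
My plan combines a compactness argument for the uniform radius $r$ with a two-dimensional construction of adapted coordinates, and finishes with a regularity check. I would organize the proof into three steps.

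For the uniform radius, Hypothesis~\ref{hyp:convergence}(1), together with the remark following it, implies that $M$ is compact with strictly positive injectivity radius $r_0>0$. Choosing any $r<r_0$, the exponential map $\exp_{x_0}$ is a diffeomorphism from a Euclidean ball $\Omega\subset\R^2$ onto $B(x_0,r)\subset M$ for every $x_0\in M$, providing a single chart on a region of uniform size. This gives the claimed $r$.

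The heart of the argument is the construction of coordinates in which $\mathcal{L}$ assumes the stated form. In any initial chart around $x_0$, the operator has the form~\eqref{eq:localcoords}, and the effective $2\times2$ coefficient matrix $B(y)=\sqrt{\det G(y)}\,A(y)\,G(y)^{-1}$ is symmetric positive definite with coordinate-invariant determinant $\det B=\det A$. The target form $\mathcal{L}[\phi]=-\nabla\cdot((\det A)^{1/2}\nabla\phi)$ is equivalent, after matching second- and first-derivative coefficients, to two pointwise algebraic conditions: $B$ is a scalar multiple of the identity, and $\sqrt{\det G}$ is constant in the new coordinates; a determinant computation then forces the scalar to equal $(\det A)^{1/2}$ and is consistent with $|\det J|=\sqrt{\det G}$ for the coordinate-change Jacobian $J$. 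Translating these requirements into a PDE for the coordinate change yields, after fixing the natural rotational gauge, a scalar Beltrami-type equation. Its solvability in two dimensions is classical (Korn--Lichtenstein; Ahlfors--Bers), and the uniform ellipticity of $A$ (Hypothesis~\ref{hyp:convergence}(2)) combined with compactness of $M$ bounds the associated Beltrami coefficient in modulus strictly away from $1$ uniformly in $x_0$, so the construction can be carried out on a ball of uniform radius $r$.

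Finally, regularity of $G$ in the new chart follows from $A\in C^2(M)$: the Beltrami coefficient is $C^2$, and standard Schauder-type regularity for the Beltrami equation gives a $C^{2,\alpha}$ coordinate change, whence $G\in C^2$ as claimed. The principal obstacle is the middle step---simultaneously normalizing the coefficient matrix and the volume form while preserving uniformity of the chart size across $x_0\in M$. This is an intrinsically two-dimensional phenomenon enabled by the Beltrami equation and has no direct analogue in higher dimensions without further geometric input, which is consistent with the emphasis placed by Hypothesis~\ref{hyp:convergence}(1) on restricting to 2D.
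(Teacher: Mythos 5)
Your proposal takes a genuinely different and substantially more elaborate route than the paper. The paper's proof simply declares ``choose a local metric such that $G = (\det A)^{-1/2}A$,'' from which $\det G = 1$ and $AG^{-1} = (\det A)^{1/2}I$ follow by a one-line $2\times 2$ computation. No Beltrami theory appears; the entire content of the paper's argument is picking coordinates inside the injectivity radius and then asserting the form of $G$. You correctly sense that this ``choice'' cannot be made for free: once the Riemannian structure of $M$ is fixed, the metric tensor in a chart is determined up to coordinate changes, and realizing a prescribed matrix field as the metric tensor requires actually constructing a coordinate map. Your instinct to invoke the $2$D conformal theory (Korn--Lichtenstein / Ahlfors--Bers) and to control the Beltrami coefficient by uniform ellipticity and compactness is exactly the right machinery for any such construction.

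The gap is in the middle step: the two requirements you extract---that $B = \sqrt{\det G}\,AG^{-1}$ be a scalar multiple of the identity, and that $\sqrt{\det G}$ be constant (which your own determinant computation forces to equal $1$)---together amount to prescribing $G = (\det A)^{-1/2}A$ componentwise. That is three scalar constraints on a coordinate map with only two component functions, and it is not a single Beltrami equation. Isothermal coordinates for the modified metric $gA^{-1}$ deliver only the first requirement, $B = (\det A)^{1/2}I$, and fix the chart up to conformal maps; a further conformal reparametrization cannot in general also make $\sqrt{\det G}$ constant, since that would require $\log\sqrt{\det G}$ to be harmonic. The form actually achievable is $\mathcal{L}[\phi] = -\frac{1}{\sqrt{\det G}}\nabla\cdot\bigl((\det A)^{1/2}\nabla\phi\bigr)$, with a leftover positive, bounded prefactor. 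This weaker form is all the paper really needs: the equation $\mathcal{L}[\phi] = f^h$ is still equivalent to the uniformly elliptic divergence-structure equation $-\nabla\cdot\bigl((\det A)^{1/2}\nabla\phi\bigr) = \sqrt{\det G}\,f^h$, which is enough for the Harnack and Schauder estimates used in Lemmas~\ref{lem:phiBounded} and~\ref{lem:phideriv}. In fact, even the Beltrami step is dispensable for this weaker conclusion: in \emph{any} local coordinates one already has $\mathcal{L}[\phi] = -\frac{1}{\sqrt{\det G}}\nabla\cdot(B\nabla\phi)$ with $B$ symmetric positive definite and $1/\sqrt{\det G}$ bounded above and below, and absorbing the prefactor into $f^h$ gives a uniformly elliptic divergence-form problem directly.
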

\begin{proof}
Let $x_0 \in M$ and fix any $r<r_I$ where $r_I$ is the injectivity radius of the manifold $M$.  Then we can consider a bounded set $\Omega\subset\R^2$ and a set of coordinates $y:\Omega \to B(x_0,r)$. In local coordinates~\cite{cabre1997nondivergent}, the PDE operator~\eqref{eq:PDEoperator} takes the form
\[
\mathcal{L}[\phi] = \frac{-1}{\sqrt{\det G}}\nabla \cdot\left(\sqrt{\det G} AG^{-1}\nabla \phi\right), \quad y \in \Omega.
\]

Now we choose a local metric such that $G = (\det A)^{-1/2}A$. We note that $G\in C^2(M)$ is strictly positive definite since $A$ has both these properties.  We note that $\det(G) = 1$ so that the PDE in local coordinates becomes
\[ \mathcal{L}[\phi] = -\nabla\cdot\left((\det A)^{1/2} \nabla\phi\right). \]
This is a uniformly elliptic operator since $A$ is positive definite.
\end{proof}

Importantly, because our manifold is compact, we can cover it with finitely many coordinate patches.
\begin{lemma}[Finite covering of the manifold]\label{lem:geodesicballs}
For every $r>0$, there exists a finite set of geodesic balls $\left\{ B_{r}^{i} \right\}_{i=1}^n$ such that
\[ M \subseteq \bigcup\limits_{i=1}^n B_r^i. \]
\end{lemma}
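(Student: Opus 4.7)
The plan is to invoke compactness of $M$ together with a standard open-cover argument using geodesic balls centered at each point. First I would fix $r>0$ and, for each $x\in M$, form the open geodesic ball
\[ B_r(x) = \{y\in M \mid d_M(x,y) < r\}. \]
The key preliminary fact I need is that each $B_r(x)$ is open in the manifold topology; this is immediate because on a Riemannian manifold the geodesic distance $d_M$ induces the same topology as the underlying smooth structure, so $B_r(x)$ is literally a metric-ball in $(M,d_M)$ and hence open.

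Next I would observe that the collection $\{B_r(x)\}_{x\in M}$ is trivially an open cover of $M$, since each $x\in M$ satisfies $x\in B_r(x)$. By Hypothesis~\ref{hyp:convergence}, $M$ is a compact 2D orientable surface without boundary. Compactness then guarantees that we may extract a finite subcover $\{B_r(x_i)\}_{i=1}^n$ such that
\[ M \subseteq \bigcup_{i=1}^n B_r(x_i). \]
Setting $B_r^i := B_r(x_i)$ completes the argument.

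The main (minor) obstacle is purely bookkeeping: one should confirm that the notion of ``geodesic ball'' used in the statement is the metric ball in $d_M$ as opposed to the exponential-map image $\exp_{x_i}(B_r(0))\subset T_{x_i}M$, which agrees with the metric ball only when $r$ is below the injectivity radius. Since the present lemma is stated for every $r>0$ and since compactness of $M$ ensures a uniform positive injectivity radius (cf.\ the remark following Hypothesis~\ref{hyp:convergence}), either interpretation works: for $r$ smaller than the injectivity radius the two notions coincide, and for larger $r$ the metric ball is automatically a valid geodesic ball in the looser sense. No further estimates are required, and the result is essentially a one-line consequence of compactness.
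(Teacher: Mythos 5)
Your proof is correct and is exactly the standard compactness argument that the paper leaves implicit (the lemma is stated without proof). Your extra remark distinguishing metric balls from exponential-image balls is a reasonable clarification but not a genuine departure; in context the authors apply the lemma only for small $r$ (below the injectivity radius, via Lemma~\ref{lem:localPDE}), where the two notions coincide, so either reading suffices.
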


\subsubsection{Properties of barrier function}
We can now use standard regularity results for uniformly elliptic PDEs in Euclidean space to deduce key properties of the barrier function $\phi^h$.
\begin{lemma}[Bounds on barrier function]\label{lem:phiBounded}
There exists a constant $C>0$ such that for all sufficiently small $h>0$
\[ \|\phi^h\|_{L^\infty(M)} \leq C(h^\gamma + h^{\alpha-{(d-1)}\gamma}). \]
\end{lemma}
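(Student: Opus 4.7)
The plan is to decompose $\phi^h$ into its mean value plus a mean-zero part, obtain an $L^\infty$ bound on the mean-zero part via elliptic regularity and the $L^2$ estimate on $f^h$ from Lemma~\ref{lem:fhL2}, and then use the pinning condition $\phi^h(x_0) = K_0 h^\gamma$ to control the mean value itself.

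Concretely, I would write $\phi^h = \tilde{\phi}^h + \bar{\phi}^h$ where $\bar{\phi}^h = \frac{1}{|M|}\int_M \phi^h\,dx$ and $\tilde{\phi}^h$ has zero mean. Since $\mathcal{L}$ annihilates constants, $\tilde{\phi}^h$ still satisfies $\mathcal{L}\tilde{\phi}^h = f^h$. On mean-zero functions the self-adjoint operator $\mathcal{L}$ is coercive by the Poincaré inequality on the compact manifold $M$, giving the $H^1$ bound $\|\tilde{\phi}^h\|_{H^1(M)} \leq C\|f^h\|_{L^2(M)}$.

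To upgrade this to an $H^2$ and hence $L^\infty$ estimate I would invoke Lemma~\ref{lem:geodesicballs} to cover $M$ with finitely many geodesic balls and Lemma~\ref{lem:localPDE} to rewrite the PDE on each patch as a uniformly elliptic divergence-form equation in Euclidean space with $C^2$ coefficients. A partition-of-unity argument combined with standard interior $H^2$ regularity on each patch yields
\[
\|\tilde{\phi}^h\|_{H^2(M)} \leq C\bigl(\|f^h\|_{L^2(M)} + \|\tilde{\phi}^h\|_{L^2(M)}\bigr) \leq C\|f^h\|_{L^2(M)} \leq C h^{\alpha-\gamma},
\]
using Lemma~\ref{lem:fhL2} in the last step. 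Since $\dim M = 2$, the Sobolev embedding $H^2(M) \hookrightarrow L^\infty(M)$ then gives
\[
\|\tilde{\phi}^h\|_{L^\infty(M)} \leq C h^{\alpha-\gamma}.
\]

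Finally, the pinning condition $\phi^h(x_0) = K_0 h^\gamma$ controls the mean: $\bar{\phi}^h = \phi^h(x_0) - \tilde{\phi}^h(x_0)$, so $|\bar{\phi}^h| \leq K_0 h^\gamma + C h^{\alpha-\gamma}$. Combining with the previous estimate yields
\[
\|\phi^h\|_{L^\infty(M)} \leq |\bar{\phi}^h| + \|\tilde{\phi}^h\|_{L^\infty(M)} \leq C(h^\gamma + h^{\alpha-\gamma}),
\]
as desired. The main obstacle is the elliptic regularity step: one needs to be careful that the constants in the $H^2$ estimate are independent of $h$, which is ensured by the fact that the coefficients $(\det A)^{1/2}$ and metric $G$ in Lemma~\ref{lem:localPDE} are fixed $C^2$ data independent of $h$, so the finite cover and the Calderón--Zygmund-type estimates on each patch yield a uniform constant. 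Everything else reduces to routine verification.
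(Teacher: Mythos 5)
Your proof is correct, but it takes a genuinely different route from the paper's. The paper works directly with the oscillation of $\phi^h$: it subtracts the minimum, applies the de Giorgi--Nash--Moser Harnack inequality (Gilbarg--Trudinger, Theorems~8.17--8.18) on each coordinate ball in the finite cover, and chains the Harnack estimates across overlapping balls from the point where the minimum is attained to the point where the maximum is attained, obtaining $\max_M \phi^h - \min_M \phi^h \leq C_n\|f^h\|_{L^2(M)}$; the pin $\phi^h(x_0) = K_0 h^\gamma$ then locates the function. You instead split $\phi^h$ into its spatial mean plus a mean-zero part $\tilde\phi^h$, get an $H^1$ bound on $\tilde\phi^h$ from coercivity via Poincar\'e, upgrade to $H^2$ by a partition-of-unity patching of local $L^2$ elliptic estimates, and invoke the two-dimensional Sobolev embedding $H^2(M)\hookrightarrow L^\infty(M)$ before using the pin to fix the mean. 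Both routes are sound and deliver the stated bound. The variational approach you use is in some ways more elementary (it avoids Moser iteration) and sits naturally alongside the Lax--Milgram framework used to construct $\phi^h$; its drawback is the embedding step, which ties you to $d\leq 3$ and would require higher-order $W^{k,2}$ estimates (and correspondingly more regularity of $f^h$) in higher dimensions. The paper's Harnack-chain argument scales more cleanly in $d$: one only needs $f^h \in L^q$ with $q>d/2$, and the same covering Lemma~\ref{lem:geodesicballs} is reused verbatim in the subsequent Schauder estimate (Lemma~\ref{lem:phideriv}), so the two regularity lemmas share one geometric setup. You do flag the key technical concern --- that the constants in the patch-wise estimates must be $h$-independent --- and your justification (the metric and coefficients in Lemma~\ref{lem:localPDE} are fixed $C^2$ data) is the right one.
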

\begin{proof}
Since $\phi^h$ is continuous on a compact manifold, it achieves a maximum and minimum at some points $x, y\in M$.  Then since $M$ is connected, we can use Lemmas~\ref{lem:localPDE}-\ref{lem:geodesicballs} to construct a finite set of balls of radius $r/4$: $\left\{B_{r/4}^i\right\}_{i=1}^n$ such that
\[ x\in B_{r/4}^n, \quad y \in B_{r/4}^1, \quad B_{r/4}^i \cap B_{r/4}^{i+1} \neq \emptyset. \]
On each corresponding (larger) ball $B_r^i$ of radius $r$, we can interpret the barrier equation~\eqref{eq:phiplus} as a uniformly elliptic divergence structure PDE on a local coordinate patch in $\R^{{d}}$.

Now we denote
\begin{equation}
\bar{\phi}^h(x) \equiv \phi^h(x) - \min_{M} \phi^h.
\end{equation}
This is non-negative, which allows us to apply the de Giorgi-Nash-Moser Harnack inequality, which applies to PDEs in divergence form.
Taking $q={2d}$ in~\cite[Theorems~8.17-8.18]{gilbargtrudinger}, there exists a constant $C>0$ such that for every $i=1, \ldots, n$ we have
\[ \sup_{B_{r/4}^i} \bar{\phi}^h \leq C \left( \inf_{B_{r/4}^i} \bar{\phi}^h + \left\Vert f^h \right\Vert_{L^{{d}}(M)} \right).
 \]

Recalling that $\bar{\phi}^h(y) = 0$, we find that
\[ \sup_{B_{r/4}^1} \bar{\phi}^h \leq C_1 \|f^h\|_{L^{{d}}(M)}.\]
Now we use this to obtain an estimate in the ball $B_{r/4}^2$, which overlaps with $B_{r/4}^1$.  
\begin{align*} \sup_{B_{r/4}^2} \bar{\phi}^h 
&\leq C_1 \left( \inf_{B_{r/4}^2} \bar{\phi}^h + \left\Vert f^h \right\Vert_{L^{{d}}(M)} \right) \\
&\leq C_1 \left( \sup_{B_{r/4}^1} \bar{\phi}^h + \left\Vert f^h \right\Vert_{L^{{d}}(M)} \right) \\
&\leq C_2\|f^h\|_{L^{{d}}(M)}.
\end{align*}

Continuing this chaining argument $n$ times, we find that
\[ \|\bar{\phi}^h\|_{L^\infty(M)} = \bar{\phi}^h(x) \leq C_n\|f^h\|_{L^{{d}}(M)}. \]

By Lemma~\ref{lem:fhL2}, $\|f^h\|_{L^{{d}}(M)} = \bO(h^{\alpha-{(d-1)}\gamma})$.  We recall also that
\[ \min\limits_M \phi^h \leq \phi^h(x_0) = K_0h^\gamma, \]
which completes the proof.
\end{proof}

\begin{lemma}[Derivative bounds]\label{lem:phideriv}
There exists a constant $C>0$ such that for all sufficiently small $h>0$
\[ \left\Vert \phi^h \right\Vert_{C^{1} (M)} +\left\Vert \phi^h \right\Vert_{C^{2} (M)} + \left[ \phi^h \right]_{C^{2,1} (M)} \leq C (h^\gamma+h^{\alpha-{(d+1)}\gamma})
. \]
\end{lemma}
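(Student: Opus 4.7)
The plan is to deduce the derivative bounds for $\phi^h$ from classical elliptic regularity theory applied in local coordinate patches, using the previously established bounds on $\phi^h$ in $L^\infty$ and on $f^h$ in $C^{0,1}$. First, I would appeal to Lemma~\ref{lem:localPDE} to recast the barrier equation $\mathcal{L}[\phi^h] = f^h$ on any geodesic ball $B(x_0,r)$ (with $r$ less than the injectivity radius) as the uniformly elliptic divergence-form equation
\begin{equation*}
-\nabla\cdot\left((\det A)^{1/2} \nabla \phi^h\right) = f^h
\end{equation*}
in a bounded Euclidean domain, with coefficients in $C^2$. Lemma~\ref{lem:geodesicballs} provides a finite cover of $M$ by such balls, so it suffices to derive a uniform local estimate and then combine them.

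On each coordinate patch, I would invoke an interior Schauder-type estimate for divergence-form equations of the form
\begin{equation*}
\|\phi^h\|_{C^{2,1}(B_{r/2})} \leq C\left(\|\phi^h\|_{L^\infty(B_r)} + \|f^h\|_{C^{0,1}(B_r)}\right),
\end{equation*}
where the constant $C$ depends only on the ellipticity constants and the $C^2$ norm of the coefficient matrix (which are bounded uniformly on $M$ by Hypothesis~\ref{hyp:convergence}). The cleanest route is to differentiate the divergence-form PDE once, yielding a (non-divergence-form) equation for each first partial derivative $\partial_j \phi^h$ with right-hand side involving $\partial_j f^h$ and lower-order terms built from two derivatives of the coefficients. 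This converts the required estimate into a standard $C^{2,\alpha}$ Schauder bound (e.g.~\cite[Chapter~6]{gilbargtrudinger}) applied to $\partial_j \phi^h$, whose right-hand side is Hölder continuous whenever $f^h$ is Lipschitz. Since $\phi^h \in C^3(M)$ by Lemma~\ref{lem:phiExists}, all derivatives involved are legitimate and the norm on the left-hand side is finite.

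Finally, I would plug in the estimates already at our disposal. From Lemma~\ref{lem:phiBounded}, $\|\phi^h\|_{L^\infty(M)} \leq C(h^\gamma + h^{\alpha-\gamma})$. From the proof of Lemma~\ref{lem:fhC1}, the Lipschitz seminorm of $f^h$ is $\bO(h^{\alpha-3\gamma})$, while $\|f^h\|_{L^\infty(M)} = \bO(h^{\alpha-2\gamma})$; for small $h<1$ the Lipschitz part dominates, so $\|f^h\|_{C^{0,1}(M)} = \bO(h^{\alpha-3\gamma})$. Combining these via the local Schauder estimate on each patch in the finite cover and taking the maximum yields
\begin{equation*}
\|\phi^h\|_{C^{2,1}(M)} \leq C\left(h^\gamma + h^{\alpha-\gamma} + h^{\alpha-3\gamma}\right) \leq C(h^\gamma + h^{\alpha-3\gamma}),
\end{equation*}
since $h^{\alpha-\gamma} \leq h^{\alpha-3\gamma}$ for $0<\gamma$ and small $h<1$. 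This gives the required bound for each of $\|\phi^h\|_{C^1(M)}$, $\|\phi^h\|_{C^2(M)}$, and $[\phi^h]_{C^{2,1}(M)}$, which are each controlled by the full $C^{2,1}$ norm.

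The main obstacle is the invocation of the right form of elliptic regularity: the classical Schauder estimate provides $C^{2,\alpha}$ control for $\alpha<1$ with $C^\alpha$ right-hand side, but we want $C^{2,1}$ control with a Lipschitz right-hand side. The reduction to a differentiated PDE sidesteps this gap cleanly, but requires verifying that the auxiliary lower-order terms introduced by differentiation stay controlled by the same $\|f^h\|_{C^{0,1}} + \|\phi^h\|_{L^\infty}$ combination (which they do, using the $C^2$ regularity of $A$ and Cacciopoli-type iteration to bootstrap $\|\phi^h\|_{L^\infty}$ control into $\|\phi^h\|_{C^1}$ control on a slightly smaller ball).
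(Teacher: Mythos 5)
Your proof follows essentially the same path as the paper---local coordinates via Lemma~\ref{lem:localPDE}, a finite cover via Lemma~\ref{lem:geodesicballs}, an interior elliptic estimate on each patch combined across the cover, and substitution of the Lemma~\ref{lem:phiBounded} and Lemma~\ref{lem:fhC1} bounds---and you correctly flag that the Lipschitz endpoint of the Schauder scale is a borderline case that the paper's direct citation of Corollary~6.3 of Gilbarg--Trudinger does not literally cover.

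The gap is in your workaround. After differentiating the PDE to obtain an equation for $\partial_j\phi^h$, you assert that the right-hand side (which involves $\partial_j f^h$) is H\"older continuous ``whenever $f^h$ is Lipschitz.'' That implication is false: $f^h\in C^{0,1}$ gives only $\partial_j f^h\in L^\infty$, and such a derivative need not be continuous at all (e.g.\ $f(x)=\abs{x}$), so a $C^{2,\mu}$ Schauder estimate for $\partial_j\phi^h$ cannot be closed using $\|f^h\|_{C^{0,1}}$ alone. The explicit construction does give $f^h\in C^{1,1}$ (the second derivative of $\psi^h$ is bounded, with jumps at the cap interfaces), so the differentiated route can be made rigorous by controlling $\|f^h\|_{C^{1,\mu}}$ for some $\mu<1$---but differentiating $\psi^h$ twice shows $\|f^h\|_{C^{1,1}}=\bO(h^{\alpha-4\gamma})$, worse by a factor of $h^{-\gamma}$ than the $\bO(h^{\alpha-3\gamma})$ you feed into the estimate. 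If that larger bound were actually needed, it would propagate through the barrier comparison and force a re-optimization of $\gamma$ in Theorem~\ref{thm:mainconvergence}. To retain the bound stated in the lemma you would need a genuine $C^{2,1}$-from-$C^{0,1}$ interior estimate for divergence-form elliptic equations, justified by some other means than the differentiation trick.
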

\begin{proof}
As in the previous lemma, we can use Lemmas~\ref{lem:localPDE}-\ref{lem:geodesicballs} to construct a finite set of balls of radius $r/2$: $\left\{B_{r/2}^i\right\}_{i=1}^n$ such that
on each corresponding (larger) ball $B_r^i$ of radius $r$, we can interpret the barrier equation~\eqref{eq:phiplus} as a uniformly elliptic divergence structure PDE on a local coordinate patch in $\R^{{d}}$.

We now  apply a classical interior regularity result for uniformly elliptic PDE~\cite[Corollary~6.3]{gilbargtrudinger}.  In particular, there exists a constant $C>0$ such that for every $i=1, \ldots, n$ we have
\[
\left\Vert \phi^h \right\Vert_{C^{1} (B_{r/2}^i)} + \left\Vert \phi^h \right\Vert_{C^{2} (B_{r/2}^i)} + \left[ \phi^h \right]_{C^{2,1} (B_{r/2}^i)} \leq C \left( \left\Vert \phi^h \right\Vert_{L^{\infty}(B_r^i)} + \left\Vert f^h \right\Vert_{C^{0, 1}(B_r^i)} \right).
\]

Then a corresponding H\"{o}lder estimate over the entire manifold is obtained by summing the estimates over the $n$ coordinate patches.
Thus we find that
\[
\left\Vert \phi^h \right\Vert_{C^{1} (M)} + \left\Vert \phi^h \right\Vert_{C^{2} (M)} + \left[ \phi^h \right]_{C^{2,1} (M)} \leq C' \left( \left\Vert \phi^h \right\Vert_{L^{\infty}(M)} + \left\Vert f^h \right\Vert_{C^{0, 1}(M)} \right).
\]

We recall from Lemmas~\ref{lem:fhC1} and~\ref{lem:phiBounded} the estimates
\[ \left\Vert f^h \right\Vert_{C^{0, 1}(M)} = \bO(h^{\alpha-{(d+1)}\gamma}), \quad \left\Vert \phi^h \right\Vert_{L^{\infty}(M)} = \bO(h^\gamma+h^{\alpha-{(d-1)}\gamma}), \]
which completes the proof.
\end{proof}

\subsubsection{Convergence rates}
The preceding regularity results allow us to select a value for $\gamma$ (which determines the radius of the small cap about $x_0$) that ensures that the family of barrier functions $\phi^h$ are uniformly Lipschitz continuous.
\begin{corollary}[Lipschitz bounds]\label{lem:lipschitz}
Let $\gamma \leq \alpha/{(d+1)}$.  Then  there exists a constant $K_\phi >0$ such that for all sufficiently small $h>0$, 
\[ \abs{\nabla\phi^h}_{C^0(M)} \leq K_\phi. \]
\end{corollary}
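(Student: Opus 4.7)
The plan is to invoke the $C^{2,1}$ regularity estimate already established in Lemma~\ref{lem:phideriv} and simply observe that the hypothesis $\gamma \leq \alpha/3$ makes the right-hand side uniformly bounded in $h$. There is no new analytical ingredient required; the corollary is essentially a bookkeeping exercise about the exponents.

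More concretely, first I would note that since $|\nabla \phi^h|_{C^0(M)} \leq \|\phi^h\|_{C^1(M)}$, Lemma~\ref{lem:phideriv} immediately yields
\[
|\nabla \phi^h|_{C^0(M)} \leq C(h^\gamma + h^{\alpha - 3\gamma})
\]
for some constant $C$ independent of $h$. Second, I would analyze the two exponents under the standing assumption $\gamma \leq \alpha/3$. Since $\gamma > 0$, the term $h^\gamma$ tends to zero as $h \to 0$, so it is bounded by $1$ for all sufficiently small $h$. The condition $\gamma \leq \alpha/3$ is exactly equivalent to $\alpha - 3\gamma \geq 0$, so that $h^{\alpha - 3\gamma} \leq 1$ for all $h \leq 1$. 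Combining these observations, for all sufficiently small $h$,
\[
|\nabla \phi^h|_{C^0(M)} \leq 2C,
\]
and setting $K_\phi := 2C$ completes the proof.

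There is essentially no obstacle here: the real work was carried out in the construction of the barrier $\phi^h$ and the derivation of the estimate in Lemma~\ref{lem:phideriv}. The only subtlety worth flagging is the role of the threshold $\gamma = \alpha/3$, which is precisely the exponent that balances the two competing contributions arising in the $C^{2,1}$ estimate, namely the $L^\infty$ bound on $\phi^h$ (of order $h^\gamma$, coming from the pinning condition $\phi^h(x_0) = K_0 h^\gamma$) and the Lipschitz norm of $f^h$ (of order $h^{\alpha - 3\gamma}$, coming from the steep transition across the annulus $S^h$). This balance is what motivates the choice $\gamma \leq \alpha/3$ and ultimately yields the $h^{\alpha/3}$ rate in Theorem~\ref{thm:mainconvergence}.
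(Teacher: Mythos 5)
Your proof is correct and coincides with the paper's implicit argument: Corollary~\ref{lem:lipschitz} is presented immediately after Lemma~\ref{lem:phideriv} without separate proof, precisely because the condition $\gamma \leq \alpha/3$ makes both exponents in $C(h^\gamma + h^{\alpha-3\gamma})$ nonnegative and hence the $C^1$ bound uniform for small $h$. Your observation about $\gamma = \alpha/3$ balancing the two contributions also matches the paper's discussion immediately following the corollary.
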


The requirement of Corollary~\ref{lem:lipschitz}, combined with the fact that the barrier function scales like $h^{\gamma} + h^{\alpha-{(d-1)}\gamma}$ (Lemma~\ref{lem:phiBounded}), suggests $\gamma=\alpha/{(d+1)}$ as an optimal choice.  

Now we prove the main result.

\begin{proof}[Proof of Theorem \ref{thm:mainconvergence}]
 We substitute both the error $u^h-u$ and the barrier $\phi^h$ into the scheme~\eqref{eq:schemeDef} at all $x\in\G^h$.

{\bf Case 1}: Let $x\in B^h\cap\G^h$. Then we can use the linearity of the scheme to compute
\begin{align*}
F^h(x,&u^h(x)-u(x),u^h(x)-u(x)-u^h(\cdot)+u(\cdot)) \\ &= \left(L^h(u^h(x)-u^h(\cdot)) + h^\alpha u^h(x) + f(x)\right)-\left(L^h(u(x)-u(\cdot)) + h^\alpha u(x)\right) \\
  &\leq -L(x,\nabla u(x), D^2u(x)) + C\left[u(x)\right]_{C^{2,1}(M)}h^\alpha + h^\alpha\|u\|_{L^\infty(M)}\\
	&= f(x) + C_1h^\alpha.
\end{align*}
Above, we have used the fact that $u$ solves the linear PDE~\eqref{eq:PDE} and $u^h$ solves the scheme~\eqref{eq:scheme}.

Similarly, we can compute
\begin{align*}
F^h(x,\phi^h(x),\phi^h(x)-\phi^h(\cdot)) &= L^h(\phi^h(x)-\phi^h(\cdot)) + h^\alpha\phiplus + f(x)\\
  &\geq f^h(x) - [\phiplus]_{C^{2,1}(M)}h^\alpha - \|\phiplus\|_{L^\infty(M)}h^\alpha + f(x)\\
	&\geq \frac{K_1h^\alpha}{\abs{B^h}}-C_2h^\alpha(h^\gamma + h^{\alpha-{(d+1)}\gamma}+h^{\alpha-{(d-1)}\gamma})+f(x)
\end{align*}
where we utilize the regularity bounds in Lemmas~\ref{lem:phiBounded}-\ref{lem:phideriv}.  Making the particular choice of $\gamma = \alpha/{(d+1)}$ yields
\[ F^h(x,\phi^h(x),\phi^h(x)-\phi^h(\cdot)) \geq h^\alpha\left(\frac{K_1}{\abs{B^h}}-C_2\right)-C_3h^{{\alpha(d+2)/(d+1)}} + f(x). \]

Then if we make the choice $K_1 > ({C_1+C_2+C_3})\abs{B_h}$ when we define the barrier functions in~\eqref{eq:phiplus}, we find that
\[ F^h(x,u^h(x)-u(x),u^h(x)-u(x)-u^h(\cdot)+u(\cdot)) < F^h(x,\phi^h(x),\phi^h(x)-\phi^h(\cdot)) \]
for sufficiently small $h>0$.

{\bf Case 2}: Let $x\in (b^h\cup S^h)\cap\G^h$.
Recalling that $u(x_0) = 0$, we can bound $u$ in this region by
\[ \abs{u(x)} \leq K_u d_M(x,x_0) \leq 2K_uh^\gamma. \]
Since $u^h(x) = 0$ uniformly in this small cap, we have
\[ F^h(x,u^h(x)-u(x),u^h(x)-u(x)-u^h(\cdot)+u(\cdot)) = u^h(x)-u(x) \leq 2K_uh^\gamma. \]
Similarly, we recall that $\phiplus(x_0) = K_0h^\gamma$ so that
\[ F^h(x,\phi^h(x),\phi^h(x)-\phi^h(\cdot)) = \phiplus(x) \geq (K_0-2K_\phi)h^\gamma. \]

Then if we make the choice $K_0 > 2(K_u+K_\phi)$ in the definition of the barrier function~\eqref{eq:phiplus}, we find that
\[ F^h(x,u^h(x)-u(x),u^h(x)-u(x)-u^h(\cdot)+u(\cdot)) < F^h(x,\phi^h(x),\phi^h(x)-\phi^h(\cdot)) \]
for sufficiently small $h>0$.

Combining these two cases, we find that
\[ F^h(x,u^h(x)-u(x),u^h(x)-u(x)-u^h(\cdot)+u(\cdot)) < F^h(x,\phi^h(x),\phi^h(x)-\phi^h(\cdot)) \]
for all $x\in\G^h$ and sufficiently small $h>0$.  This allows us to appeal to the Discrete Comparison Principle (Theorem~\ref{thm:discreteComparison}) to conclude that
\[ u^h(x)-u(x) \leq \phi^h(x), \quad x \in \G^h. \]
Combined with the maximum bound on $\phi^h$ (Lemma~\ref{lem:phiBounded}) applied to the case $\gamma=\alpha/{(d+1)}$, we obtain the result
\[ u^h(x)-u(x) \leq Ch^{\alpha/{(d+1)}}. \]

We can do the same procedure using $u(x)-u^h(x)$ to obtain the final result
\[ \|u^h-u\|_{L^\infty(\G^h)} \leq Ch^{\alpha/{(d+1)}}. \]
\end{proof}

\section{Approximation of Solution Gradients}\label{sec:mapping}
Having established convergence rates for the solution $u^h$ of the discrete operator, we can now use them to establish a convergence approximate of the gradient of $u^h$ with rates.

Given a function $u\in C^1(M)$ and its values on a discrete set of points $\G^h$, the design of approximations to its first derivatives is a well-studied ``textbook'' problem.  However, as discussed in \autoref{sec:empirical}, consistent approximations for first derivatives may not produce correct results when they are applied to a discrete approximation $u^h$ instead of the limiting function $u$.

Here we describe a framework for producing a family of convergent approximations of the gradient, which are based on a given discrete approximation $u^h$ with error bounds.  We provide error bounds for the gradient of $u^h$; unsurprisingly, these are bounded by the $L^\infty$ error in the approximation of $u^h$.  Combined with the convergence rate bounds of Theorem~\ref{thm:mainconvergence}, this immediately provides a provably convergent method for approximating the gradient of the solution to a divergence-structure linear elliptic PDE~\eqref{eq:PDE} on a compact manifold.

Let $x_0\in M$ be any point on the manifold and let $\nu\in\Tf_{x_{0}}$ be a unit vector in the tangent plane.  We focus on the construction of a discrete approximation to $\frac{\partial u(x_0)}{\partial \nu}$, the first directional derivative of $u$ in the direction $\nu$.  By projecting into the tangent plane as described in \autoref{sec:background}, this is equivalent to constructing convergent approximations of a first directional derivative in $\R^d$.

We will consider finite difference approximations of the form
\bq\label{eq:fd1}
\Dt_\nu u(x_0) = \frac{1}{r}\sum\limits_{i=1}^k a_i (u(x_i)-u(x_0))
\eq
where $x_i\in\G^h$ are discretization points satisfying $\abs{x_i-x_0} = \bO(r)$ and $r \geq h$ denotes the stencil width of this approximation.  

We make the following assumptions on the discrete solution $u^h$ and the gradient approximation~\eqref{eq:fd1}.
\begin{hypothesis}[Conditions on gradient approximation]
\label{hyp:gradient}
We make the following assumptions on the approximations:
\begin{enumerate}
\item There exists $p>0$ such that at every point $x\in\G^h$, the discrete approximation $u^h$ satisfies
\[ u(x) = u^h(x) + \bO(h^p). \]
\item The stencil width {satisfies} $r \geq h$ for every $h>0$.
\item There exist constants $C_1, C_2>0$ such that for every $\G^h$ there exist points $x_1, \ldots, x_k \in \G^h$ satisfying 
\[ C_1 r \leq \abs{x_i-x_0} \leq C_2 r, \quad i = 1, \ldots, k.\]
\item There exists $\beta>0$ such that the gradient approximation applied to the limiting function $u$ satisfies
\[ \Dt_\nu u(x_0) = \frac{\partial u(x_0)}{\partial\nu} + \bO(r^\beta). \]
\item The coefficients in the gradient approximation satisfy $a_i = \bO(1)$ as $h\to 0$.
\end{enumerate}
\end{hypothesis}

Under these assumptions, we can immediately provide error bounds for the gradient approximation applied to the discrete solution $u^h$.  Moreover, we can use these bounds to determine an optimal stencil width $r$ as a function of $h$.

\begin{theorem}[Error bounds for gradient]\label{thm:errorGrad}
Under the assumptions of Hypothesis~\ref{hyp:gradient} and choose $r = \bO\left(h^{p/(\beta+1)}\right)$.  Then 
\[ \frac{\partial u(x_0)}{\partial\nu} = \Dt_\nu u^h(x_0) + \bO\left(h^{\frac{p\beta}{\beta+1}}\right). \]
\end{theorem}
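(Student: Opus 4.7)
The plan is to decompose the gradient error into two parts via the triangle inequality:
\[
\Dt_\nu u^h(x_0) - \frac{\partial u(x_0)}{\partial \nu} = \underbrace{\left(\Dt_\nu u^h(x_0) - \Dt_\nu u(x_0)\right)}_{\text{(I): effect of solution error}} + \underbrace{\left(\Dt_\nu u(x_0) - \frac{\partial u(x_0)}{\partial \nu}\right)}_{\text{(II): consistency error of } \Dt_\nu}.
\]
Part (II) is immediately bounded by $\bO(r^\beta)$ using Hypothesis~\ref{hyp:gradient}(4). The whole content of the proof therefore lies in bounding (I), and then in choosing $r$ to balance the two contributions.

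For part (I), I would use the linearity of $\Dt_\nu$ in the data, together with the explicit form~\eqref{eq:fd1}, to write
\[
\Dt_\nu u^h(x_0) - \Dt_\nu u(x_0) = \frac{1}{r}\sum_{i=1}^k a_i\Bigl[(u^h(x_i)-u(x_i)) - (u^h(x_0)-u(x_0))\Bigr].
\]
Each term in brackets is $\bO(h^p)$ by Hypothesis~\ref{hyp:gradient}(1), the coefficients satisfy $a_i=\bO(1)$ by Hypothesis~\ref{hyp:gradient}(5), and the number of stencil points $k$ is fixed (or at worst $\bO(1)$), so a direct estimate yields
\[
\left|\Dt_\nu u^h(x_0) - \Dt_\nu u(x_0)\right| \leq \frac{C}{r}\,h^p = \bO\!\left(\frac{h^p}{r}\right).
\]
The role of Hypothesis~\ref{hyp:gradient}(3) here is mild but necessary: it guarantees the denominator $r$ really does reflect the scale of the stencil, so no hidden small factors degrade the $h^p/r$ bound.

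Combining the two pieces, the total error is $\bO\left(r^\beta + h^p/r\right)$. The final step is to pick $r$ so as to minimize this sum: setting $r^\beta = h^p/r$ gives $r^{\beta+1}=h^p$, i.e.\ $r=\bO\!\left(h^{p/(\beta+1)}\right)$, which is precisely the prescription in the statement. Substituting back produces a balanced error of $\bO\!\left(h^{p\beta/(\beta+1)}\right)$, completing the proof. I do not expect any real obstacles here: the argument is essentially a linearity-plus-triangle-inequality estimate, and the only subtlety is the standard optimization trade-off between consistency (which prefers $r$ small) and amplification of solution error (which prefers $r$ large). One should also verify that the choice $r=\bO(h^{p/(\beta+1)})$ is compatible with the assumption $r\geq h$ from Hypothesis~\ref{hyp:gradient}(2); this holds automatically whenever $p\leq \beta+1$, and in any case asymptotically as $h\to 0$ one has $h^{p/(\beta+1)}\geq h$ precisely when $p/(\beta+1)\leq 1$, a condition that is harmless in the regimes of interest (and if violated one simply takes $r=h$ and recovers the sharper bound $\bO(h^{p-1}+h^\beta)$).
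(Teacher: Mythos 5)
Your proof is correct and follows essentially the same route as the paper: split the error into the consistency error $\bO(r^\beta)$ of $\Dt_\nu$ and the amplified solution error $\bO(h^p/r)$ from linearity of the stencil, then balance by taking $r^{\beta+1}\sim h^p$. The paper presents this as a single chain of substitutions rather than an explicit triangle-inequality decomposition, but the underlying estimates are identical.
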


\begin{corollary}[Error bounds for solution gradient]\label{cor:errorGrad}
Assume the conditions of Hypotheses~\ref{hyp:convergence},~\ref{hyp:scheme},~\ref{hyp:gradient} are satisfied. Let $u$ be the solution of the PDE~\eqref{eq:PDE}, $u^h$ be the solution of the approximation scheme~\eqref{eq:schemeDef}, and $r = \bO\left(h^{\frac{\alpha}{(d+1)(\beta+1)}}\right)$.  Then 
\[ \frac{\partial u(x_0)}{\partial\nu} = \Dt_\nu u^h(x_0) + \bO\left(h^{\frac{\alpha\beta}{(d+1)(\beta+1)}}\right). \]
\end{corollary}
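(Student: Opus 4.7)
The plan is to observe that this corollary is essentially an immediate composition of Theorem~\ref{thm:mainconvergence} (in its generalized $d$-dimensional form noted in the remark following the theorem) with Theorem~\ref{thm:errorGrad}. The work is to match up parameters correctly rather than to perform new estimates.

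First, I would invoke Theorem~\ref{thm:mainconvergence} to deduce that the $L^\infty$ error in the discrete solution satisfies
\[ \|u^h - u\|_{L^\infty(\G^h)} \leq C\, h^{\alpha/(d+1)}, \]
which verifies condition (1) of Hypothesis~\ref{hyp:gradient} with the specific value $p = \alpha/(d+1)$. The remaining conditions of Hypothesis~\ref{hyp:gradient} are taken as assumptions of the corollary (the stencil width bound, the geometric conditions on the stencil nodes, the order $\beta$ consistency of the finite difference on smooth data, and the uniform boundedness of the weights $a_i$), so no additional work is required to verify them.

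Next, I would substitute this value of $p$ into Theorem~\ref{thm:errorGrad}. The prescribed stencil width becomes
\[ r = \bO\bigl(h^{p/(\beta+1)}\bigr) = \bO\bigl(h^{\alpha/((d+1)(\beta+1))}\bigr), \]
which is precisely the scaling assumed in the statement of the corollary. Applying the conclusion of Theorem~\ref{thm:errorGrad} then yields
\[ \frac{\partial u(x_0)}{\partial \nu} = \Dt_\nu u^h(x_0) + \bO\bigl(h^{p\beta/(\beta+1)}\bigr) = \Dt_\nu u^h(x_0) + \bO\bigl(h^{\alpha\beta/((d+1)(\beta+1))}\bigr), \]
which is the desired bound.

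There is no significant obstacle: the only thing to be careful about is citing the $d$-dimensional version of Theorem~\ref{thm:mainconvergence} from the remark rather than the stated $d=2$ case, and ensuring that the value of $p$ produced by the solution error estimate is fed consistently into both the choice of $r$ and the resulting gradient error rate in Theorem~\ref{thm:errorGrad}. Once these parameters line up, the bound follows by direct substitution and the proof is essentially a single chain of equalities.
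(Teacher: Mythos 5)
Your proposal is correct and matches the paper's intent: the paper states Corollary~\ref{cor:errorGrad} immediately after Theorem~\ref{thm:errorGrad} without a separate proof, precisely because it is meant to follow by the substitution $p = \alpha/(d+1)$ (from Theorem~\ref{thm:mainconvergence} and its generalized-$d$ remark) into Theorem~\ref{thm:errorGrad}. Your parameter bookkeeping is accurate and no further argument is needed.
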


\begin{proof}[Proof of Theorem~\ref{thm:errorGrad}]
We can substitute directly into the approximation scheme to compute
\begin{align*}
\frac{\partial u(x_0)}{\partial\nu} &= \Dt_\nu u(x_0) + \bO(r^\beta)\\
  &= \frac{1}{r}\sum\limits_{i=1}^k a_i (u(x_i)-u(x_0)) + \bO(r^\beta)\\
	&= \Dt_\nu u^h(x_0) + \frac{1}{r} \bO(h^p) + \bO(r^\beta)\\
	&= \Dt_\nu u^h(x_0) + \bO\left(\frac{h^p}{h^{p/(\beta+1)}} + h^{\frac{p\beta}{\beta+1}}\right)\\
	&= \Dt_\nu u^h(x_0) + \bO\left(h^{\frac{p\beta}{\beta+1}}\right).
\end{align*}
\end{proof}

{
This result provides a means for correctly approximating solution gradients even when the accuracy of the approximate solution is very low.  Moreover, we notice that there is no requirement that the scheme used to approximate solution gradients be monotone and therefore limited to first-order accuracy ($\beta \leq 1$).  This opens up the possibility of using arbitrarily high-order gradient approximations, coupled to a carefully chosen stencil width $r$.  It is worth noting that as we take higher-order approximations ($\beta\to\infty$), we find that the best error bound we can achieve approaches $\bO\left(h^{\alpha/(d+1)}\right)$.  That is, the best possible error bound for approximations of the solution gradient is actually the same as the error bound guaranteed by Theorem~\ref{thm:mainconvergence} for the approximate solution itself.
}

{\section{Computational Results}\label{sec:results}
Finally, we provide some computational results to verify the error estimates developed in sections~\ref{sec:convergence}-\ref{sec:mapping}.

Consider a point cloud $\G^h\subset M$ that discretizes the manifold $M$.  In order to utilize the approximation scheme (and resulting error bounds) in~\eqref{eq:schemeDef}, we need to design a monotone finite difference approximation of the form $L^h(x,u(x)-u(\cdot))$ that is defined for $x\in\G^h$ and is consistent with the linear PDE operator $\Lf[u](x)$.

A variety of approaches are available for discretizing PDEs on manifolds~\cite{demlow2009higher,dziuk2013finite,fortunato2022high,lai2019solving,oneill2018second}.  Particularly simple are methods that allow the surface PDE to be approximated using schemes designed for PDEs in Euclidean space~\cite{ClosestPoint,TsaiManifolds}.  We test the error bounds using the tangent plane approach of~\cite{HT_OTonSphere2}, which can easily be used to design monotone approximation schemes. 

We will briefly summarize this scheme, before providing computational results in one and two dimensions.  We emphasize that our goal here is not to design an optimal scheme, but rather to test the predictions of Theorem~\ref{thm:mainconvergence} and Corollary~\ref{cor:errorGrad}, which can be applied to any monotone discretization scheme.  We also verify that, even using a very low order scheme,  it is possible to recover a convergent approximation to the solution gradient using the approach of section~\ref{sec:mapping}.
}

{\subsection{Geodesic normal coordinates}\label{sec:normalcoords}
We begin by recasting the equation using a convenient choice of local coordinates.
Consider the PDE 
\begin{equation}\label{eq:PDEGen}
-\text{div}_M(A(x)\nabla_Mu(x)) {+f(x) = 0}, \quad x\in M
\end{equation}
at a particular point $x_0\in M$.  We relate this to an equivalent PDE posed on the local tangent plane $\Tf_{x_0}$ through a careful choice of local coordinates.  In general, local coordinates will introduce distortions to the differential operators.  However, this problem was avoided in~\cite{HT_OTonSphere, HT_OTonSphere2} with the use of \emph{geodesic normal coordinates}, which preserve distance from the reference point $x_0$. In these coordinates the metric tensor is an identity matrix and the Christoffel symbols vanish at the point $x_0$.

Given some neighbourhood $N_{x_0}\subset M$ of the point $x_0\in M$, we let $v_{x_0}:N_{x_0}\to\Tf_{x_0}$ denote geodesic normal coordinates.  Because they are chosen to preserve distances from the point $x_0$, they satisfy
\[ d_M(x,x_0) = \| v_{x_0}(x) - x_0 \| \]
where $d_M$ represents the geodesic distance along $M$ and $\|\cdot\|$ the usual Euclidean distance on the tangent plane.

We can now introduce a local projection of $u$ onto the relevant tangent plane $\Tf_{x_{0}}$ in a neighborhood of $x_0$ as follows
\bq\label{eq:tangentFunction}
\tilde{u}_{x_0}(z) = u\left( v_{x_0}^{-1}(z) \right).
\eq
This allows us to re-express the PDE~\eqref{eq:PDEGen} at the point $x_0$ as an equivalent PDE on the local tangent plane.  We define
\bq\label{eq:Tangent}
 -\nabla \cdot \left( A(z) \nabla \tilde{u}_{x_{0}}(z) \right) {+ f(z) = 0}, \quad z \in \mathcal{T}_{x_{0}}
\eq
where now $\nabla$ is the usual Euclidean differential operator.   Because the particular choice of coordinates does not introduce distortions, the PDE operator will preserve its original form at the reference point $x_0$. In particular,
\[ \left.-\text{div}_M(A(x)\nabla_Mu(x))\right|_{x=x_0} = \left. -\nabla \cdot \left( A(z) \nabla \tilde{u}_{x_{0}}(z) \right)\right|_{z=x_0}. \]

The problem of approximating the PDE operator~\eqref{eq:PDEGen} at a point $x_0\in M$ is now reduced to the problem of approximating the operator~\eqref{eq:Tangent} at the point $x_0$ in the local tangent plane.  This  allows one to make use of any existing method for designing monotone approximation of PDEs in Euclidean space.
}

{
\subsection{Discretization}\label{sec:disc}
Now we consider any fixed discretization point $x_i\in\G^h$
and establish a computational neighborhood $N(i)$ about this point by defining
\begin{equation}\label{eq:neighborhood}
N(i) = \{j \mid x_j\in\G, \, d_{M}(x_i, x_j) \leq \sqrt{h}  \}.
\end{equation}
We seek an expression of the form $L^h(x_i,u(x_i)-u(x_j))$ that depends on the value of $u$ at nearby discretization points $x_j$ with $j\in N(i)$.

To accomplish this, we let $\Tf_{x_i}$ denote the tangent plane to $M$ at $x_i$.
Once the computational neighborhood $N(i)$ is established, the points $x_j \in N(i)$ are projected on to the local tangent plane $\mathcal{T}_{x_{i}}$ via a geodesic normal coordinate projection, 
\[z_j = v_{x_i}(x_j). \] 
We denote the resulting point cloud on the tangent plane by
\[ \Nf(x_i) = \{z_j \mid j \in N(i)\} \subset \Tf_{x_{i}}. \]
We extend the grid function $u:\G^h\to\R$ onto this point cloud on the tangent plane by identifying
\bq\label{eq:extend} u(z_j) = u(x_j), \quad z_j \in \Nf(x_i). \eq

A discretization of the PDE operator~\eqref{eq:PDE} at the point $x_i\in\G^h$ can now be obtained by designing a discretization of the tangent plane PDE operator
\[ -\nabla \cdot (A(z)\nabla u(z)), \quad z \in \Tf_{x_i} \]
at the point $x_i$.  The discretization should depend upon the values of $u$ on the tangent plane point cloud $\Nf(x_i)$, which can be related back to values of $u$ at points on the original manifold via~\eqref{eq:extend}.
}
{
\subsection{Monotone approximation schemes}\label{sec:monotone}
Recent work on generalized or meshfree finite difference methods demonstrate how monotone finite difference methods can be designed for unstructured grids in Euclidean space~\cite{demkowicz1984some, FroeseMeshfreeEigs, Nochetto_MAConverge, Seibold}.  
We briefly review the procedure for designing monotone generalized finite difference methods for approximating linear divergence structure operators of the form
\bq\label{eq:linearEuclidean} -\nabla\cdot (A(z)\nabla u(z))\eq
in Euclidean space, referring to the aforementioned works for further details.

Consider the problem of approximating~\eqref{eq:linearEuclidean} at a point $z_i$.  It is natural to want to use values of $u$ at the ``nearest neighbors'' to accomplish this.  Surprisingly, though, given any fixed stencil width, it is always possible to find a linear elliptic PDE operator that does not admit a consistent, monotone discretization on that stencil~\cite{Kocan,MotzkinWasow}.  For general degenerate elliptic operators, it is sometimes necessary to allow the stencil to grow wider as the grid is refined in order to achieve both consistency and monotonicity.

We notice that the PDE operator can be written in the form
\bq\label{eq:opform} \nabla\cdot(A\nabla u) = -\sum\limits_{k \in K} \frac{\partial}{\partial z_{k_1}}\left(a_{k}\frac{\partial u}{\partial z_{k_2}}\right) \eq
where
\[ K = \{k\in\mathbb{N}^2 \mid \|k\|_\infty \leq d\}. \]
This motivates us to seek a finite difference approximation of the form
\bq\label{eq:fdform} -\nabla\cdot(A\nabla u)(z_i) \approx -\sum\limits_{k\in K}\sum\limits_{z \in \Nf(z_i)}\sum\limits_{y\in\Nf(z_i)}c_{k}(z,y) a_k(y)u(z).\eq

The monotonicity condition requires that the coefficient of $u(z)$ be non-positive for each $z\neq z_i$.  This leads to the set of linear inequality constraints
\[ \sum\limits_{k\in K}\sum\limits_{y\in\Nf(z_i)}c_k(z,y)a_k(y) \geq 0, \quad z \in \Nf(z_i) \backslash \{z_i\}. \]
To achieve consistency, we Taylor expand the terms in~\eqref{eq:fdform} about the reference point $z_i$.  We then compare the coefficients of each term with the desired operator~\eqref{eq:opform}, which leads to a system of linear equations that must be satisfied by the coefficients $c_k(z,y)$.

In typical implementations, one possibility is to exploit the structure of the underlying PDE to set many of the coefficients $c_k(x,y)$ to zero \emph{a priori} and obtain closed form expressions for the (small) number of non-zero coefficients~\cite{FroeseMeshfreeEigs}.  Another option is to use simple analytical and computational optimization tools to numerically determine the values of the coefficients and establish bounds needed to ensure consistency~\cite{HL_ThreeDimensions,Seibold}.
}

{
\subsection{Computational Examples}
\subsubsection{One dimension}
We first return to the study of Poisson's equation on the one-dimensional torus $\mathbb{T}^1$:
\bq\label{eq:poisson1d}
\begin{cases}
-u''(x) = f(x), & x\in\mathbb{T}^1\\
u(0) = 0.
\end{cases}
\eq

We use the same $\bO(h)$ discretization studied in section~\ref{sec:empirical}, but modified to enforce the condition $u(x)=0$ in a small cap with a radius of $\bO(h^{1/2})$, as suggested by Theorem~\ref{thm:mainconvergence}.  In particular, we let $n=4^k$ be a perfect square and use the scheme
\bq\label{eq:scheme1d}
\begin{cases}
-\dfrac{u^h(x_{i+\sqrt{n}})+u^h(x_{i-\sqrt{n}})-2u^h(x_i)}{nh^2}+h(1+x_i)u^h(x_i) = f(x_i) + h, & d_{\mathbb{T}^1}(x_i,0) > 2h^{1/2}\\
u^h(x_i) = 0, & d_{\mathbb{T}^1}(x_i,0) \leq 2h^{1/2}.
\end{cases}
\eq

%
We begin by studying Laplace's equation ($f(x)=0$).
We plot the results in Figure~\ref{fig:torus1D_laplace}.  Surprisingly, we observe $\bO(h)$ error rather than the expected $\bO(h^{1/2})$. This can be explained by the fact that enforcing $u^h(x)=0$ on a small cap does not introduce any error since the exact solution is $u(x)=0$.  We also note (Figure~\ref{fig:torus1D_u0}) that this approach leads to a smoother error than the approaches studied in section~\ref{sec:empirical}.

\begin{figure}%
\centering
\subfigure[]{\includegraphics[width=0.45\textwidth]{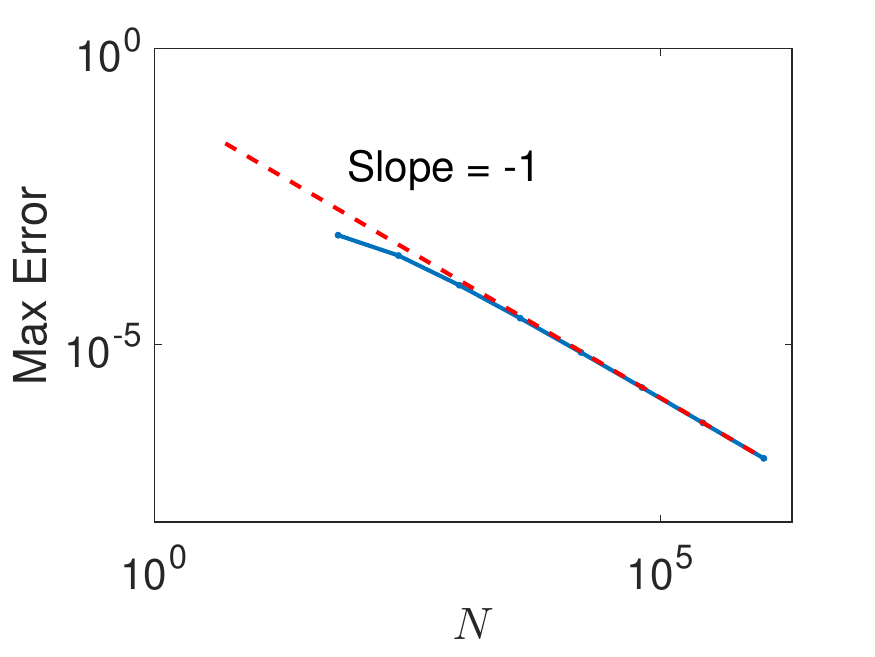}\label{fig:torus1D_u0_error}}
\subfigure[]{\includegraphics[width=0.45\textwidth]{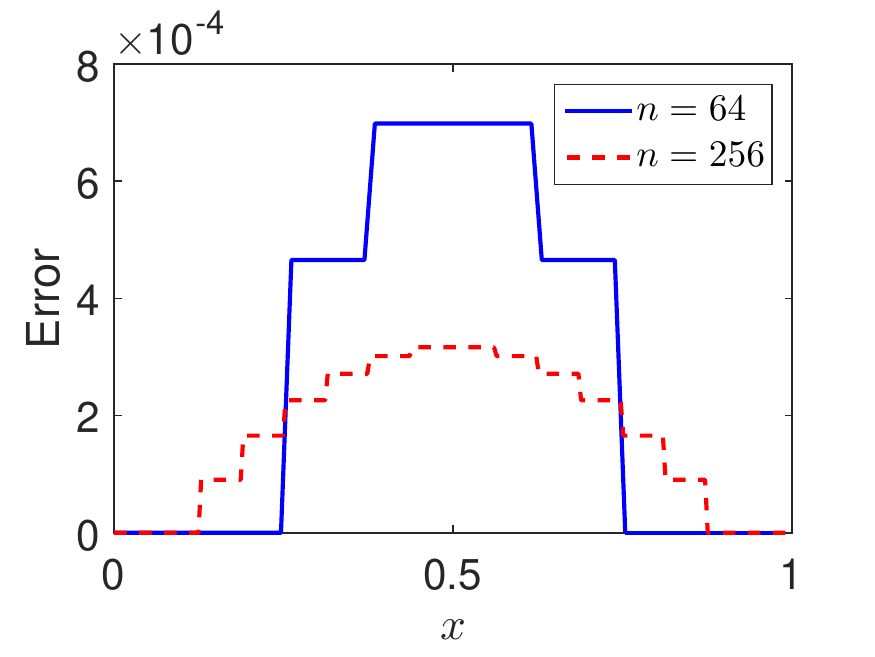}\label{fig:torus1D_u0}}
\caption{\subref{fig:torus1D_u0_error}~Maximum error and \subref{fig:torus1D_u0}~solution error for $n=64$ and $n=256$ for the solution of Laplace's equation on $\mathbb{T}^1$.}%
\label{fig:torus1D_laplace}%
\end{figure}

We secondly consider an example of a non-trivial solution ($u(x) = \sin(2\pi x)$) obtained by solving~\eqref{eq:poisson1d} with a right-hand side $f(x) = 4\pi^2\sin(2\pi x)$.  For this non-trivial example, we do observe the expected convergence rate of $\bO(h^{1/2})$ (Figure~\ref{fig:torus1D_sin_error}).

We also perform a study of the wider-stencil gradient approximations proposed in section~\ref{sec:mapping}.  We utilize three-different finite difference approximations of the derivative: a first-order forward difference ($\beta=1$), a second-order centered difference ($\beta=2$), and the following fourth order difference ($\beta=4$):
\[ \Dt_xu(x_0) = \frac{1}{r}\left(\frac{1}{12}u(x_0-2r)-\frac{2}{3}u(x_0-r)+\frac{2}{3}u(x_0+r)-\frac{1}{12}u(x_0+2r)\right).\]
In each case, we let $r = \bO(h^{\frac{1}{2(\beta+1)}})$, as suggested by Corollary~\ref{cor:errorGrad}.  The maximum error in the gradient approximations, which is plotted in Figure~\ref{fig:torus1D_deriverr}, agrees very well with the predicted $\bO(h^{\frac{\beta}{2(\beta+1)}})$ convergence rate.  In particular, we observe that as $\beta$ grows, the convergence rate becomes closer to the optimal rate of $\bO(h^{1/2})$.

\begin{figure}%
\centering
\subfigure[]{\includegraphics[width=0.45\textwidth]{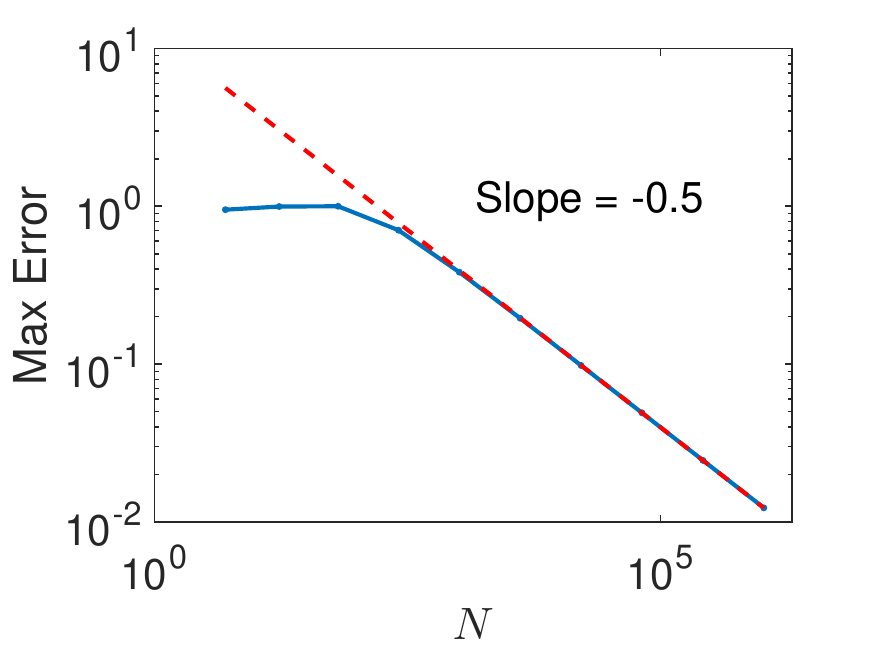}\label{fig:torus1D_sin_error}}
\subfigure[]{\includegraphics[width=0.45\textwidth]{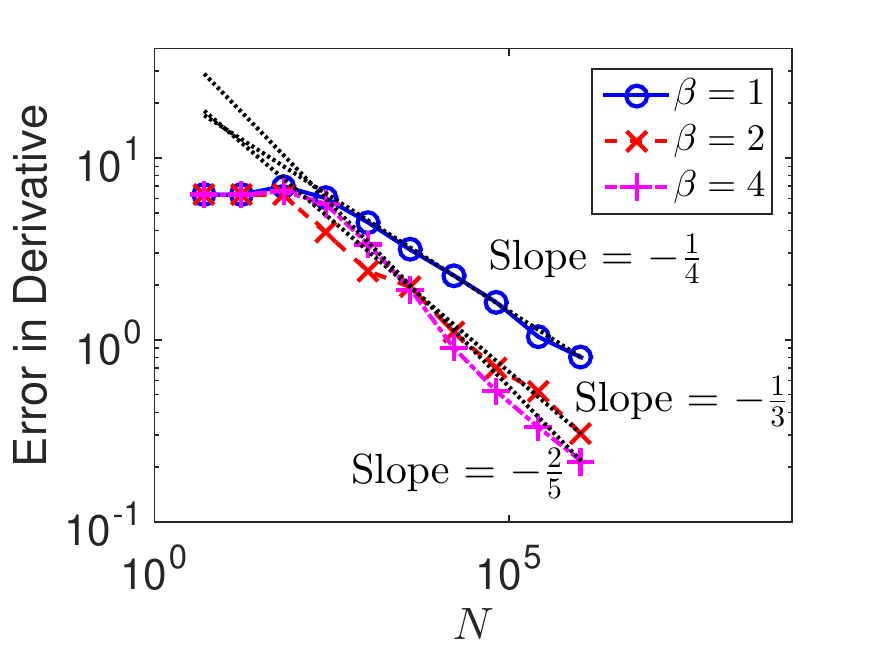}\label{fig:torus1D_deriverr}}
\caption{Maximum error in the \subref{fig:torus1D_sin_error}~solution and\subref{fig:torus1D_deriverr}~derivative for the solution of Poisson's equation on $\mathbb{T}^1$.}%
\label{fig:torus1D_poisson}%
\end{figure}

\subsubsection{Two dimensions}
Next, we demonstrate the predicted convergence rates on a two-dimensional surface using the tangent plane approach described in this section.  In light of our need to compare with an exact solution and surface gradient, we perform this test using Poisson's equation on the sphere $\mathbb{S}^2$:
\bq\label{eq:poisson2d}
\begin{cases}
-\Delta_{\mathbb{S}^2}u(\mathbf{x}) = f(\mathbf{x}), & \mathbf{x}\in\mathbb{S}^2\\
u(\mathbf{x}_0) = 0.
\end{cases}
\eq
Here we let $\mathbf{x}_0 = (0,1,0)$ and choose the right-hand side, expressed in spherical coordinates, to be
\bq\label{eq:rhs}
f(\phi, \theta) = \cos (\phi) \sin \theta \left( 1-9 \cos^2 \theta + 3 \sin^2 \theta \right).
\eq

The solution of the surface PDE~\eqref{eq:poisson2d} is
\begin{equation}
u(\phi, \theta) = \cos(\phi) \sin^3(\theta).
\end{equation}
and the surface gradient of $u$ is
\begin{equation}
\nabla_{\mathbb{S}^2} u(\phi, \theta) = -\sin(\phi)\sin(\theta) \hat{\phi} + 3\cos(\phi)\sin^2(\theta)\cos(\theta) \hat{\theta}.
\end{equation}


We discretize the sphere using a point cloud $\G^h$ consisting of $N$ points that are approximately equally spaced so that $h = \bO(\sqrt{N})$.  In particular, given some $\epsilon=\bO(h)$ and $n = \bO(h^{-1})$, we construct a layered point cloud consisting of the points
\bq
\G^h = \{(\phi_{ij},\theta_i) \mid 1 \leq i \leq n, 1 \leq j \leq \floor{n\sin\theta_i}\}.
\eq
Here we take
\[
\theta_i = \epsilon + i\frac{\pi-2\epsilon}{n}, \quad
\phi_{ij} = i\frac{1+\sqrt{5}}{2} + j\frac{2\pi}{\floor{n\sin\theta_i}}.
\]
See Figure~\ref{fig:cloud} for an example of this point cloud.

\begin{figure}%
\centering
\subfigure[]{\includegraphics[width=0.45\textwidth]{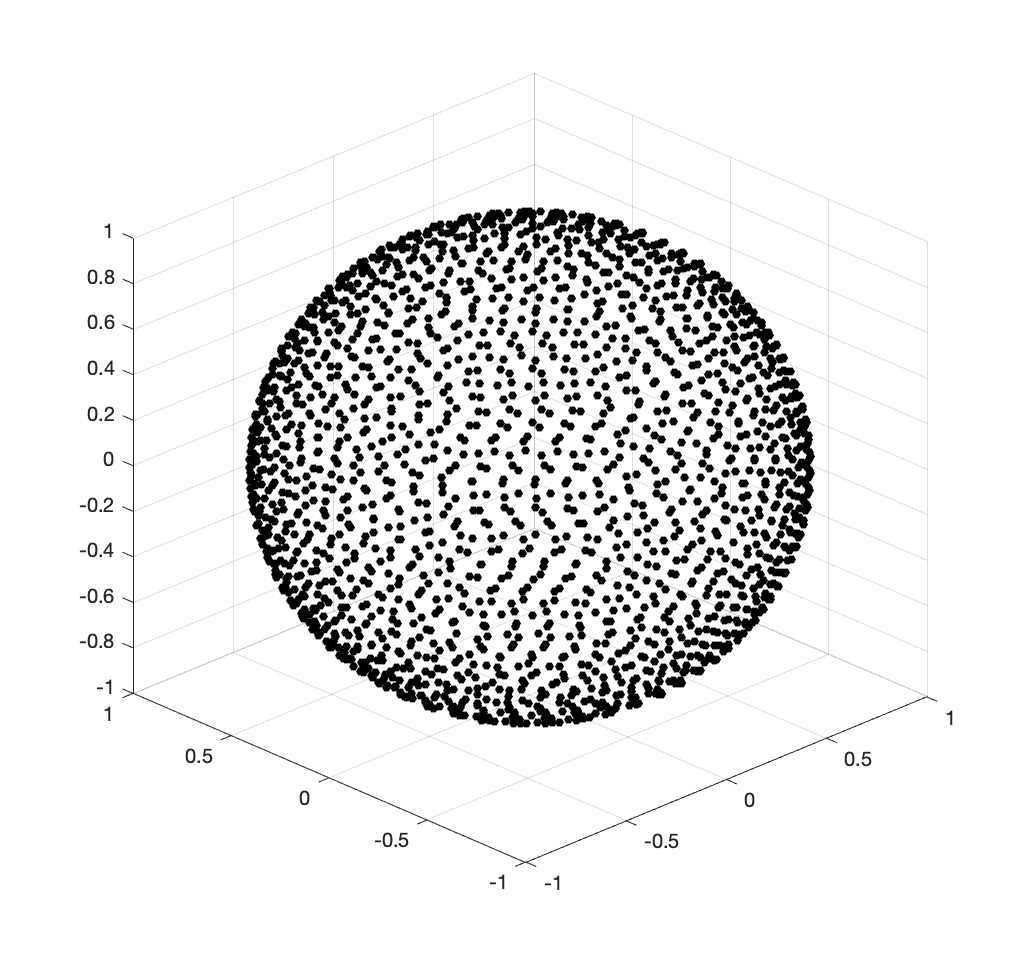}\label{fig:cloud}}
\subfigure[]{\includegraphics[width=0.45\textwidth]{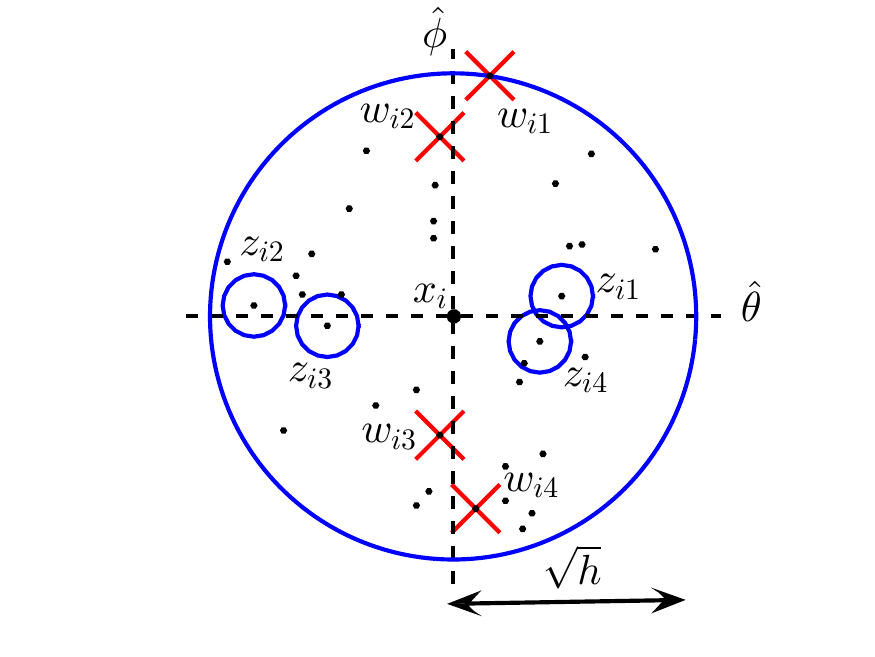}\label{fig:stencil}}
\caption{\subref{fig:cloud}~A point cloud discretizing the sphere ($N=2006$) and \subref{fig:stencil}~the stencil used to discretize \eqref{eq:poisson2d} at the point $x_i$.}%
\label{fig:discSphere}%
\end{figure}

At each point $x_i\in\G^h$, we project a $\sqrt{h}$ neighborhood onto the local tangent plane via geodesic normal coordinates as described in subsections~\ref{sec:normalcoords}-\ref{sec:disc}.  We then need to design a monotone discretization $L^h(x_i,u(x_i)-u(\cdot))$ of the two-dimensional Laplacian at the point $x_i$.  Since we do not have a structured grid in general, we utilize a meshfree finite difference approximation.  In particular, we let $(\hat{\theta},\hat{\phi})$ be our local orthogonal coordinates.  We then choose the discretization points $z_{ij},w_{ij}$, $j=1, \ldots, 4$ that lie in the $jth$ quadrant and are best aligned with $\hat{\theta}$ and $\hat{\phi}$ respectively.  See Figure~\ref{fig:stencil}.  We use the consistency and monotonicity conditions to explicitly compute coefficients $a_{ij},b_{ij} \geq 0$ and define a discrete surface Laplacian of the form
\bq\label{eq:dsiclap}
-\Delta^h_{\mathbb{S}^2}u^h_i = \sum\limits_{j=1}^4a_{ij}(u^h(x_i)-u^h(z_{ij})) + \sum\limits_{j=1}^4b_{ij}(u^h(x_i)-u^h(w_{ij})).
\eq
This yields a formal truncation error of $\bO(\sqrt{h})$; see~\cite{FroeseMeshfreeEigs} for details.

The approach analyzed in section~\ref{sec:convergence} leads us to solve the following system:
\bq\label{eq:disc2d}
\begin{cases}
-\Delta^h_{\mathbb{S}^2}u^h_i + \sqrt{h}u^h_i = f(x_i), & d_{\mathbb{S}^2}(x_i,x_0)>\mathcal{O}(h^{1/6})\\
u^h(x_i) = 0, & d_{\mathbb{S}^2}(x_i,x_0)\leq \mathcal{O}(h^{1/6})
\end{cases}
\eq

Since the formal truncation error of the scheme is $\bO(h^{1/2})$ and the dimension of the manifold is $d=2$, Theorem~\ref{thm:mainconvergence} predicts that the maximum error should scale like $\bO(h^{1/6})$.  This is precisely what we observe in our computations; see Figure~\ref{fig:uerr_2d}.

\begin{figure}%
\centering
\subfigure[]{\includegraphics[width=0.45\textwidth]{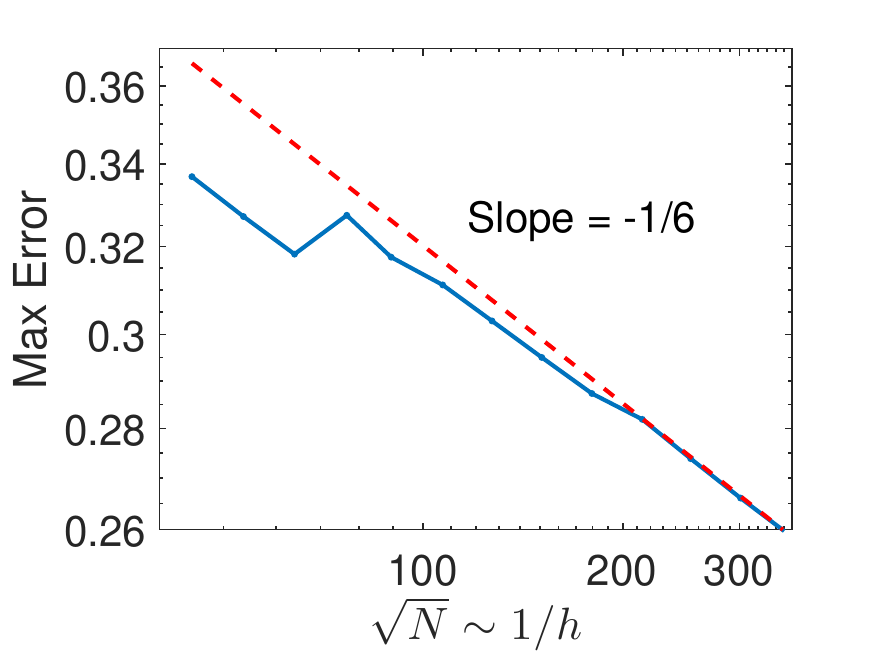}\label{fig:uerr_2d}}
\subfigure[]{\includegraphics[width=0.45\textwidth]{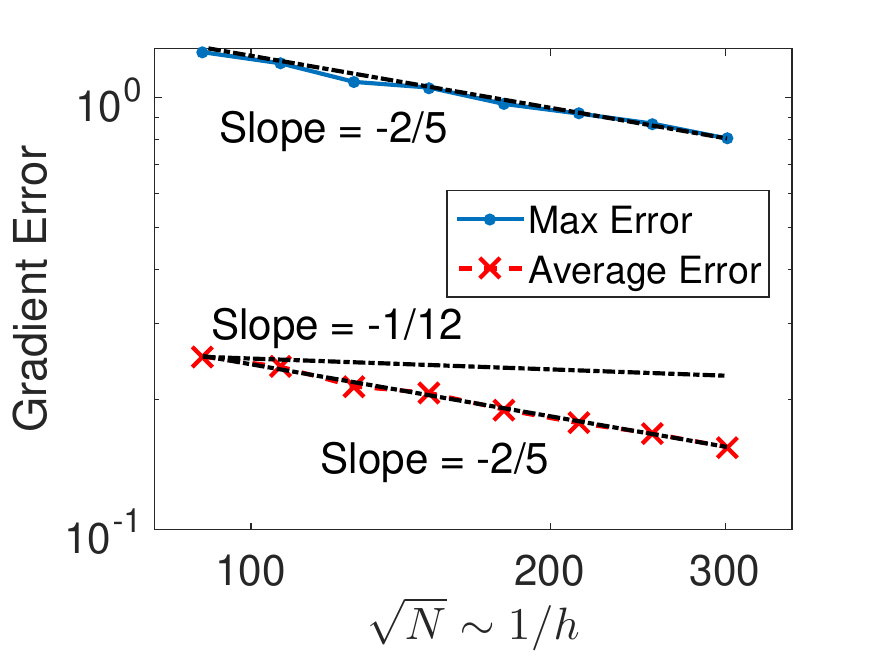}\label{fig:uderiverr_2d}}
\caption{Error in the \subref{fig:uerr_2d}~solution and \subref{fig:uderiverr_2d}~gradient for the solution of Poisson's equation on $\mathbb{S}^2$.}%
\label{fig:poisson2d}%
\end{figure}

Given the low (sublinear) accuracy of these computations, it would be natural to expect that we cannot easily recover information about the surface gradient.  In fact, given that the scheme~\eqref{eq:disc2d} sets $u^h=0$ to be constant on a cap around the origin, it would appear that we should expect a $\bO(1)$ error in any traditional finite difference approximation to the solution gradient.  However, approximation of the gradient is possible using the wider stencil approach discussed in section~\ref{sec:mapping}.  Given a stencil width $r$, we consider the following first-order ($\beta=1$) approximation of the solution gradient at the point $x_i\in\G^h$:
\bq\label{eq:gradapprox}\begin{split}
&j^* = \argmax\limits_j \left\{ \frac{u^h(x_j)-u^h(x_i)}{d_{\mathbb{S}^2}(x_i,x_j)} \mid x_j \in \G^h, 0.5r<d_{\mathbb{S}^2}(x_i,x_j)<r\right\}\\
&\nabla^h_{\mathbb{S}^2}u^h(x_i) = \frac{u^h(x_{j^*})-u^h(x_i)}{d_{\mathbb{S}^2}(x_i,x_{j^*})} \frac{z_{j^*}-z_i}{\abs{z_{j^*}-z_i}}.
\end{split}
\eq
We recall that $z_j$ denotes the projection onto the local tangent plane $\mathcal{T}_{x_i}$ via normal coordinates.

Following Corollary~\ref{cor:errorGrad}, we choose $r = \bO(h^{1/12})$.  Despite the low accuracy of $u^h$, this wider stencil approach successfully approximates the solution surface gradient (Figure~\ref{fig:uderiverr_2d}).  In fact, we observe superconvergence, with an observed error of $\bO(h^{2/5})$ that is significantly better than the $\bO(h^{1/12})$ error bound predicted by Corollary~\ref{cor:errorGrad}.  Indeed, we actually observe a better convergence rate in the gradient than in the approximate solution that was used to estimate the gradient.  Moreover, while the error is artificially large (though still converging to zero) in the cap about the origin, we observe much lower errors throughout most of the domain.  
}

\section{Conclusion}\label{sec:conclusion}
In this manuscript, we studied convergence rates of monotone finite difference approximations for uniformly elliptic PDEs on compact manifolds.  When applied to the Dirichlet problem, solutions of monotone finite difference schemes are expected to converge with an error proportional to their formal consistency error.  We demonstrated empirically that on manifolds without boundary, convergence rates can be lower than the formal consistency error.

We then derived explicit error bounds {for a class of monotone schemes} by carefully constructing barrier functions and exploiting the fact that monotone and proper schemes have a discrete comparison principle. The barrier functions solved a linear elliptic PDE in divergence form with a right-hand side proportional to the formal consistency error of the scheme in the majority of the domain.  However, because of the need to satisfy an additional solvability condition, the right-hand side was permitted to become larger in a small cap on the manifold. This resulted in a barrier function that was asymptotically larger than the formal consistency error.  Because the scaling of the volume of the small cap was dependent on dimension, we found that specific convergence rates depend on the dimension of the underlying manifold. In particular, the reduction in accuracy becomes worse as the dimension increases.

Next, we demonstrated that knowledge of convergence rates can be used to design convergent approximations of the solution gradient through the use of wide finite difference stencils.  We described a family of discrete gradients, with the optimal convergence rate in the gradient bounded by the $L^\infty$ convergence rate of the discrete solution.

Further work will involve utilizing convergence rates for linear elliptic PDEs to prove error bounds for the solutions of fully nonlinear elliptic PDEs. This would apply, for example, to PDEs arising from solving the Optimal Transport problem on the sphere, which is of particular interest due to its application to optical design problems~\cite{Wang_Reflector} and mesh generation~\cite{Weller_OTonSphere}.  The results of this article also highlight the ongoing need to design higher-order numerical methods for elliptic PDEs {in order to compensate the reduction in accuracy that can occur on manifolds without boundary}. {Finally, computational results lead to intriguing questions regarding superconvergence.  In particular, ongoing work will investigate additional conditions (beyond consistency and monotonicity) that would lead to improved error bounds for the computed solution and/or solution gradient.}

\bibliographystyle{plain}
\bibliography{OTonSphere4}

\end{document}